\documentclass[11pt,leqno]{article}
\usepackage{amsmath, amscd, amsthm, amssymb, graphics, xypic, mathrsfs, setspace, fancyhdr, times, enumerate}
\entrymodifiers={+!!<0pt,\fontdimen22\textfont2>}
\DeclareFontFamily{OT1}{pzc}{}
\DeclareFontShape{OT1}{pzc}{m}{it}%
             {<-> s * [1.195] pzcmi7t}{}
\DeclareMathAlphabet{\mathscr}{OT1}{pzc}%
                                 {m}{it}
\usepackage[colorlinks=true,pagebackref=true]{hyperref} % new hyperref package added by Jean
\hypersetup{backref}

 %notes by Aravind
 %notes by Jean

\setlength{\textwidth}{6.0in}             % Alter margins
\setlength{\textheight}{8.25in}
\setlength{\topmargin}{-0.125in}
\setlength{\oddsidemargin}{0.25in}
\setlength{\evensidemargin}{0.25in}

\newcommand{\Spec}{\operatorname{Spec}}

\newcommand{\sma}{{\scriptstyle{\wedge}}}

\newcommand{\real}{{\mathbb R}}

\newcommand{\Z}{{\mathbb Z}}
\newcommand{\N}{{\mathbb N}}
\newcommand{\A}{{\mathbb A}}

\newcommand{\aone}{{\mathbb A}^1}
\newcommand{\pone}{{\mathbb P}^1}

\renewcommand{\L}{{\mathcal L}}
\newcommand{\MW}{\mathrm{MW}}
\newcommand{\Mi}{\mathrm{M}}

\newcommand{\et}{\text{\'et}}

\newcommand{\sI}{{\overline{\mathbf I}}}  % overline I

 % Zariski

\newcommand{\CH}{{\widetilde{CH}}}

\newcommand{\K}{{{\mathbf K}}}

\renewcommand{\H}{{{\mathbf H}}}

\newcounter{intro}
\setcounter{intro}{1}

\theoremstyle{plain}
\newtheorem{thm}{Theorem}[subsection]

\newtheorem{lem}[thm]{Lemma}
\newtheorem{cor}[thm]{Corollary}
\newtheorem{prop}[thm]{Proposition}
\newtheorem*{claim*}{Claim}  %Claim

\newtheorem*{thm*}{Theorem}
\newtheorem*{problem*}{Problem}

\newtheorem{thmintro}{Theorem}

\newtheorem{corintro}[thmintro]{Corollary}

\theoremstyle{definition}
\newtheorem{defn}[thm]{Definition}

\theoremstyle{remark}
\newtheorem{rem}[thm]{Remark}
\newtheorem{remintro}[thmintro]{Remark}

\numberwithin{equation}{subsection}

\begin{document}
\pagestyle{fancy}
\renewcommand{\sectionmark}[1]{\markright{\thesection\ #1}}
\fancyhead{}
\fancyhead[LO,R]{\bfseries\footnotesize\thepage}
\fancyhead[LE]{\bfseries\footnotesize\rightmark}
\fancyhead[RO]{\bfseries\footnotesize\rightmark}
\chead[]{}
\cfoot[]{}
\setlength{\headheight}{1cm}

\author{\begin{small}Aravind Asok\thanks{Aravind Asok was partially supported by National Science Foundation Awards DMS-0966589 and DMS-1254892.}\end{small} \\ \begin{footnotesize}Department of Mathematics\end{footnotesize} \\ \begin{footnotesize}University of Southern California\end{footnotesize} \\ \begin{footnotesize}Los Angeles, CA 90089-2532 \end{footnotesize} \\ \begin{footnotesize}\url{asok@usc.edu}\end{footnotesize} \and \begin{small}Jean Fasel\thanks{Jean Fasel was partially supported by the DFG Grant SFB Transregio 45.}\end{small} \\ \begin{footnotesize}Institut Fourier-UMR 5582\end{footnotesize} \\\begin{footnotesize} 100 rue des math\'ematiques\end{footnotesize}\\ \begin{footnotesize}38402 Saint-Martin d'H{\`e}res\end{footnotesize} \\ \begin{footnotesize}\url{jean.fasel@gmail.com}\end{footnotesize}}

\title{{\bf Secondary characteristic classes \\ and the Euler class}}
\date{}
\maketitle

\begin{abstract}
We discuss secondary (and higher) characteristic classes for algebraic vector bundles with trivial top Chern class.  We then show that if $X$ is a smooth affine scheme of dimension $d$ over a field $k$ of finite $2$-cohomological dimension (with $\mathrm{char}(k)\neq 2$) and $E$ is a rank $d$ vector bundle over $X$, vanishing of the Chow-Witt theoretic Euler class of $E$ is equivalent to vanishing of its top Chern class and these higher classes. We then derive some consequences of our main theorem when $k$ is of small $2$-cohomological dimension.
\end{abstract}

\begin{footnotesize}
\setcounter{tocdepth}{1}
\tableofcontents
\end{footnotesize}
%\newpage
\section{Introduction}
\label{s:introduction}
Suppose $k$ is a field having characteristic unequal to $2$, $X=\Spec(A)$ is a $d$-dimensional smooth affine $k$-scheme and $\mathcal E$ is a vector bundle of rank $r$ over $X$.  There is a well-defined primary obstruction to $\mathcal E$ splitting off a free rank $1$ summand given by ``the" Euler class $e(\mathcal E)$ of $\mathcal E$ (see \cite[Theorem 8.2]{MField}, \cite[Chapitre 13]{FaselChowWitt} and \cite{AsokFaselComparison}, which shows two possible definitions coincide for oriented vector bundles).  When $r = d$, Morel shows that this primary obstruction is the only obstruction to splitting off a trivial rank $1$ summand, and we will focus on this case in this article.

Because the Euler class is defined using Chow-Witt theory, which is not part of an oriented cohomology theory (say in the sense of \cite{LevineMorel}), it is difficult to compute in general.  The vanishing of the Euler class implies the vanishing of the top Chern class $c_d(\mathcal E)$ in $CH^d(X)$ \cite[Proposition 6.3.1]{AsokFaselA3minus0}, though the converse is not true in general. It is therefore natural to try to approximate $e(\mathcal E)$ using structures defined only in terms of oriented cohomology theories. More precisely, we now explain the strategy involved in studying such ``approximations" as developed in Section \ref{sec:Pardon}.

If $X$ is as above, let us fix a line bundle $\L$ on $X$.  One can define the $\L$-twisted unramified Milnor-Witt K-theory sheaf $\K^{\MW}_d(\L)$, which is a sheaf on the small Nisnevich site of $X$.  The $\L$-twisted Chow-Witt group $\CH^d(X,\mathcal L)$ can be defined as the Nisnevich cohomology group $H^d(X,\K^{\MW}_d(\mathcal L))$.  With $\mathcal{E}$ as above, the Euler class $e(\mathcal{E})$ lives in this group with $\L = \det \mathcal{E}^{\vee}$.

If $\K^{\Mi}_d$ is the $d$-th unramified Milnor K-theory sheaf, then by Rost's formula $H^d(X,\K^{\Mi}_d) \cong CH^d(X)$.  There is a natural morphism of sheaves on $X$ of the form $\K^{\MW}_d(\mathcal L)\to \K^{\Mi}_d$, which furnishes a comparison morphism $\widetilde{CH}^d(X,\L) \to CH^d(X)$ whose study is the main goal of this paper.

By a result of F. Morel, the kernel of the morphism of sheaves $\K^{\MW}_d(\mathcal L)\to \K^{\Mi}_d$ is the $(d+1)$st power of the fundamental ideal in the Witt sheaf (twisted by $\mathcal L$), denoted $\mathbf{I}^{d+1}(\mathcal L)$.  The sheaf $\mathbf{I}^{d+1}(\mathcal L)$ is filtered by subsheaves of the form $\mathbf{I}^{r}(\mathcal L)$ for $r \geq d+1$:
\[
\ldots\subset \mathbf{I}^{n+d}(\mathcal L)\subset \mathbf{I}^{n+d-1}(\mathcal L)\subset\ldots\subset \mathbf{I}^{d+1}(\mathcal L)\subset \K^{\MW}_d(\mathcal L).
\]
This filtration induces associated long exact sequences in cohomology and gives rise to a spectral sequence $E(\mathcal L,\MW)^{p,q}$ computing the cohomology groups with coefficients in $\K^{\MW}_d(\mathcal L)$.

When $p=d=\mathrm{dim}(X)$, we obtain a filtration of the group $H^d(X,\K^{\MW}_d(\mathcal L))$ by subgroups $F^nH^d(X,\K^{\MW}_d(\mathcal L))$ for $n\in\N$ such that $F^0H^d(X,\K^{\MW}_d(\mathcal L))=H^d(X,\K^{\MW}_d(\mathcal L))$ and where the successive subquotients $F^nH^d(X,\K^{\MW}_d(\mathcal L))/F^{n+1}H^d(X,\K^{\MW}_d(\mathcal L))$ are computed by the groups $E(\mathcal L,\MW)^{d,d+n}_\infty$ arising in the spectral sequence.  If furthermore $k$ has finite $2$-cohomological dimension, then only finitely many of the groups $E(\mathcal L,\MW)^{d,d+n}_\infty$ are nontrivial and we obtain the following theorem.

\begin{thmintro}[{See Theorem \ref{thm:obstructions}}]
Suppose $k$ is a field having finite $2$-cohomological dimension (and having characteristic unequal to $2$).  Suppose $X$ is a smooth $k$-scheme of dimension $d$ and suppose $\mathcal L$ is line bundle on $X$.  For any $\alpha\in H^d(X,\K_d^{\MW}(\mathcal L))$, there are inductively defined obstructions $\Psi^n(\alpha)\in E(\mathcal L,\MW)_{\infty}^{d,d+n}$ for $n\geq 0$ such that $\alpha=0$ if and only if $\Psi^n(\alpha)=0$ for any $n\geq 0$.
\end{thmintro}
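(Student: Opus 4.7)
The plan is to exploit the filtration on $H^d(X,\K^{\MW}_d(\mathcal L))$ induced by the powers-of-$\mathbf{I}$ filtration of $\K^{\MW}_d(\mathcal L)$, to show that finite $2$-cohomological dimension forces this filtration to have finite length, and then to define the obstructions $\Psi^n$ as the successive components of $\alpha$ along its graded pieces. Concretely, I would set $F^0H^d(X,\K^{\MW}_d(\mathcal L)):=H^d(X,\K^{\MW}_d(\mathcal L))$ and, for $n\geq 1$,
\[
F^nH^d(X,\K^{\MW}_d(\mathcal L)) := \mathrm{image}\bigl(H^d(X,\mathbf{I}^{d+n}(\mathcal L))\longrightarrow H^d(X,\K^{\MW}_d(\mathcal L))\bigr).
\]
By the filtered-complex formalism recalled before the statement, $F^n/F^{n+1}$ is canonically identified with $E(\mathcal L,\MW)^{d,d+n}_\infty$.

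The key input is boundedness: there exists $N\geq 0$ with $F^N=0$. By Voevodsky's Milnor conjecture, for any field $F$ one has $\mathbf{I}^m(F)/\mathbf{I}^{m+1}(F)\cong H^m_\et(F,\Z/2)$. If $c:=\mathrm{cd}_2(k)<\infty$, then any closed point $x\in X$ has residue field $F_x$ which is a finite extension of $k$ with $\mathrm{cd}_2(F_x)\leq c$, so $H^m_\et(F_x,\Z/2)=0$ for $m>c$; the descending chain $\mathbf{I}^{c+1}(F_x)\supseteq \mathbf{I}^{c+2}(F_x)\supseteq\cdots$ is therefore constant, and hence equals $\bigcap_m\mathbf{I}^m(F_x)=0$ by the Arason--Pfister Hauptsatz. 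The Gersten resolution for $\mathbf{I}^{d+n}(\mathcal L)$ expresses $H^d(X,\mathbf{I}^{d+n}(\mathcal L))$ as a cokernel whose target involves only groups of the form $\mathbf{I}^{n}(F_x,\mathcal L_x)$ for $x$ closed in $X$ (up to a twist by a rank-one $F_x$-space); hence $F^n=0$ for $n>c$.

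With the filtration bounded, the obstructions are defined by descent along it. For $\alpha\in F^0$, let $\Psi^0(\alpha)\in F^0/F^1\cong E^{d,d}_\infty$ be the class of $\alpha$; if $\Psi^0(\alpha)=0$, then $\alpha\in F^1$, and we let $\Psi^1(\alpha)\in F^1/F^2\cong E^{d,d+1}_\infty$ be its class. Inductively, assuming $\Psi^0(\alpha)=\cdots=\Psi^{n-1}(\alpha)=0$ so that $\alpha\in F^n$, set $\Psi^n(\alpha)\in F^n/F^{n+1}\cong E^{d,d+n}_\infty$ to be the class of $\alpha$. The equivalence is then immediate: if $\alpha=0$, all $\Psi^n(\alpha)$ obviously vanish, while if every $\Psi^n(\alpha)$ vanishes a routine induction yields $\alpha\in F^n$ for every $n$, and in particular $\alpha\in F^{c+1}=0$.

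The only substantive obstacle is the boundedness statement $F^{c+1}=0$; everything else is standard bookkeeping for the spectral sequence of a filtered complex. Boundedness itself rests on combining Voevodsky's Milnor conjecture with the Arason--Pfister Hauptsatz, applied to the residue fields at the closed points of $X$, which have $\mathrm{cd}_2$ bounded by $\mathrm{cd}_2(k)$.
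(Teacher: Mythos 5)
Your argument is correct and follows essentially the same route as the paper: the same filtration $F^nH^d(X,\K_d^{\MW}(\mathcal L))$ by images of $H^d(X,\mathbf{I}^{d+n}(\mathcal L))$, the same identification of the graded pieces with $E(\mathcal L,\MW)^{d,d+n}_\infty$, and the same definition of $\Psi^n(\alpha)$ as successive classes. The only difference is that where the paper cites a vanishing result of Asok--Fasel for the boundedness of the filtration, you reprove it directly via the Gersten resolution, the Milnor conjecture, and the Arason--Pfister Hauptsatz applied to the residue fields at codimension-$d$ points --- which is exactly the content of the cited proposition.
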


The groups $E(\mathcal L,\MW)_2^{p,q}$ are cohomology groups with coefficients either in $\K_d^{\Mi}$ or in $\K_j^{\Mi}/2$ for $j\geq d+1$, and thus they are theoretically easier to compute than the cohomology groups with coefficients in $\K^{\MW}_d$; this is the sense in which we have ``approximated" our original non-oriented computation by ``oriented" computations.  The upshot is that if $k$ has finite $2$-cohomological dimension, we can use a vanishing result from \cite{AsokFaselThreefolds} (which appeals to Voevodsky's resolution of the Milnor conjecture on the mod $2$ norm-residue homomorphism) to establish the following result.

\begin{corintro}
\label{corintro:finitelymanyobstructions}
Let $k$ be a field having $2$-cohomological dimension $s$ (and having characteristic unequal to $2$). If $X$ is a smooth affine $k$-scheme of dimension $d$ and $\xi:\mathcal E\to X$ is a rank $d$-vector bundle on $X$ with $c_d(\mathcal E)=0$, then $\mathcal E$ splits off a trivial rank $1$ summand if and only if $\Psi^n(\mathcal E) = 0$ for $n\leq s-1$.
\end{corintro}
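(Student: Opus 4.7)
The proof combines three ingredients: Morel's splitting criterion, the preceding theorem's translation of the vanishing of a Chow-Witt class into vanishing of the obstructions $\Psi^n$, and a cohomological-dimension vanishing of the receptacles. Since $X$ is smooth affine of dimension $d$ and $\mathcal{E}$ has rank $d$, Morel's theorem (\cite[Theorem 8.2]{MField}) gives that $\mathcal{E}$ splits off a trivial rank one summand if and only if $e(\mathcal{E}) \in \CH^d(X, \det\mathcal{E}^\vee)$ vanishes. Setting $\mathcal{L} := \det\mathcal{E}^\vee$ and applying the preceding theorem to $\alpha := e(\mathcal{E})$ reduces the task to showing that $\Psi^n(e(\mathcal{E})) \in E(\mathcal{L},\MW)^{d,d+n}_\infty$ vanishes for every $n \geq 0$.

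To handle $n = 0$, observe that the quotient map $\K^{\MW}_d(\mathcal{L}) \twoheadrightarrow \K^{\Mi}_d$ induces the comparison morphism $\CH^d(X,\mathcal{L}) \to CH^d(X)$, while $E(\mathcal{L},\MW)^{d,d}_\infty$ is a subquotient of $H^d(X, \K^{\Mi}_d) \cong CH^d(X)$; by \cite[Proposition 6.3.1]{AsokFaselA3minus0} this comparison sends $e(\mathcal{E})$ to $c_d(\mathcal{E})$, so $\Psi^0(e(\mathcal{E}))$ is (the image of) $c_d(\mathcal{E})$, which vanishes by hypothesis. The core step is then to show that the higher obstructions vanish automatically for $n \geq s$. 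For $n \geq 1$, the receptacle $E(\mathcal{L},\MW)^{d,d+n}_\infty$ is a subquotient of the $E_2$-term $H^d(X, \sI^{d+n}(\mathcal{L}))$, which by Voevodsky's resolution of the Milnor conjecture is isomorphic to $H^d(X, \K^{\Mi}_{d+n}/2)$ (the twist by $\mathcal{L}$ becoming trivial on mod-$2$ sheaves). The required vanishing of this group for $n \geq s$ is precisely the content of the vanishing result cited from \cite{AsokFaselThreefolds}; heuristically, the Gersten resolution of $\K^{\Mi}_{d+n}/2$ reduces the question to the vanishing of $\K^{\Mi}_n(k(x))/2 \cong H^n_{\et}(k(x), \mu_2^{\otimes n})$ at closed points $x \in X^{(d)}$, combined with the bound $\mathrm{cd}_2(k(x)) \leq s$ on residue fields and an affine Lefschetz style estimate that makes essential use of the affine hypothesis on $X$.

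The main technical obstacle is this last step: one must be careful about the precise indexing so that the sharp cutoff $n \leq s-1$ (rather than $n \leq s$) emerges correctly, and about the routine but necessary passage from $E_2$-vanishing to $E_\infty$-vanishing. Once the vanishing has been established, the preceding theorem identifies the vanishing of $e(\mathcal{E})$ with the simultaneous vanishing of $\Psi^n(e(\mathcal{E}))$ for $0 \leq n \leq s-1$, and combining with Morel's theorem the corollary follows.
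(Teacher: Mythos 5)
Your argument is correct and matches the paper's own proof: Morel's splitting theorem reduces the question to vanishing of the Euler class, Theorem \ref{thm:obstructions} converts this to vanishing of all $\Psi^n$, the class $\Psi^0$ is identified with $c_d(\mathcal{E})$, and the obstructions for $n \geq s$ vanish automatically because their receptacles $E(\mathcal{L},\MW)^{d,d+n}_\infty$ are subquotients of $H^d(X,\bar{\mathbf{I}}^{d+n})$, whose vanishing for $n \geq s$ is exactly the result quoted from \cite{AsokFaselThreefolds}. One small caveat on your closing heuristic: that vanishing is not an ``affine Lefschetz'' phenomenon but follows from the Gersten resolution together with the bound $\mathrm{cd}_2(k(x)) \leq s + d - i$ for $x \in X^{(i)}$ (plus a surjectivity argument at the edge case $n = s$), and the affineness of $X$ is used only in Morel's splitting criterion, not in the cohomological vanishing.
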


The problem that arises then is to identify the differentials in the spectral sequence, which provide the requisite ``higher obstructions", in concrete terms.  To this end, we first observe that there is a commutative diagram of filtrations by subsheaves
\[
\xymatrix{\ldots\ar[r] & \mathbf{I}^{d+n}(\mathcal L)\ar[r]\ar@{=}[d] & \mathbf{I}^{d+n-1}(\mathcal L)\ar[r]\ar@{=}[d] & \ldots \ar[r] & \mathbf{I}^{d+1}(\mathcal L)\ar@{=}[d]\ar[r] & \K^{\MW}_d(\mathcal L)\ar[d] \\
\ldots\ar[r] & \mathbf{I}^{d+n}(\mathcal L)\ar[r] & \mathbf{I}^{d+n-1}(\mathcal L)\ar[r] & \ldots \ar[r] & \mathbf{I}^{d+1}(\mathcal L)\ar[r] & \mathbf{I}^d(\mathcal L).}
\]
The filtration on the bottom gives rise to (a truncated version of) the spectral sequence Pardon studied \cite[0.13]{PardonGW}; this spectral sequence was further analyzed in \cite{Totaro}.  Totaro showed that the differentials on the main diagonal in the $E_2$-page of the Pardon spectral sequence are given by Voevodsky's Steenrod squaring operation $Sq^2$.  Using the diagram above, we see that the differentials in the spectral sequence we define are essentially determined by the differentials in the Pardon spectral sequence, and we focus on the latter.  We extend Totaro's results and obtain a description of the differentials just above the main diagonal as well and, more generally, the differentials in our $\L$-twisted spectral sequence (see Theorem \ref{thm:main}).

We identify, using the Milnor conjecture on the mod $2$ norm-residue homomorphism, the (mod $2$) Milnor K-cohomology groups appearing in the pages of the spectral sequence above in terms of motivic cohomology groups.  Via this identification, the differentials appearing just above the main diagonal in our spectral sequence can be viewed as operations on motivic cohomology groups.  Bi-stable operations of mod $2$ motivic cohomology groups have been identified by Voevodsky \cite{VMEMSpaces} (if $k$ has characteristic $0$) or Hoyois-Kelly-{\O}stvaer \cite{HKO} (if $k$ has characteristic unequal to $2$).  It follows from these identifications that the differentials in question are either the trivial operation or the (twisted) Steenrod square.  In Section \ref{ss:nontriviality}, we compute an explicit example to rule out the case that the operation is trivial.  Finally, we put everything together in the last section to obtain, in particular, the following result.

\begin{thmintro}
\label{thmintro:secondaryclass}
Let $k$ be a field having $2$-cohomological dimension $s$ (and having characteristic unequal to $2$). Suppose $X$ is a smooth affine $k$-scheme of dimension $d$ and $\xi:\mathcal E\to X$ is a rank $d$-vector bundle on $X$ with $c_d(\mathcal E)=0$.  The secondary obstruction $\Psi^1(\alpha)$ to $\mathcal{E}$ splitting off a trivial rank $1$ summand is the class in the cokernel of the composite map
\[
H^{d-1}(X,\K_d^{\Mi}) \longrightarrow H^{d-1}(X,\K_d^{\Mi}/2) \stackrel{Sq^2 + c_1(\L) \cup}{\longrightarrow } H^d(X,\K_{d+1}^{\Mi}/2),
\]
(the first map is induced by reduction mod $2$) defined as follows: choose a lift of the class $e(\xi) \in H^d(X,\mathbf{I}^{d+1}(\det \mathcal{E}))$ and look at its image in $H^d(X,\K^{\Mi}_{d+1}/2)$ under the map $\mathbf{I}^{d+1}(\det \mathcal{E})) \to \K^{\Mi}_{d+1}/2$.  Furthermore: (i) if $k$ has cohomological dimension $1$, then the secondary (and all higher) obstructions are automatically trivial and (ii) if $k$ has cohomological dimension $2$, then the triviality of the secondary obstruction is the only obstruction to $\mathcal{E}$ splitting off a trivial rank $1$ summand.
\end{thmintro}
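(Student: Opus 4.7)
The plan is to assemble this theorem as a direct consequence of the framework already announced in the introduction: Morel's splitting criterion, the obstruction-theoretic reformulation (Theorem~\ref{thm:obstructions}), the identification of the differentials (Theorem~\ref{thm:main}), and the cohomological-dimension vanishing in Corollary~\ref{corintro:finitelymanyobstructions}.

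First, I would invoke Morel's splitting theorem to reduce the statement to the Euler class: $\mathcal{E}$ splits off a trivial rank-$1$ summand if and only if $e(\mathcal{E})\in H^d(X,\K_d^{\MW}(\det\mathcal{E}^\vee))$ vanishes. Applying Theorem~\ref{thm:obstructions} with $\alpha = e(\mathcal{E})$ and $\mathcal L = \det\mathcal{E}^\vee$, this vanishing is equivalent to the vanishing of all $\Psi^n(e(\mathcal{E}))$. The primary obstruction $\Psi^0(e(\mathcal{E}))$ is the image of $e(\mathcal{E})$ under $\K_d^{\MW}(\L)\to \K_d^{\Mi}$, which is precisely $c_d(\mathcal{E})\in H^d(X,\K_d^{\Mi}) = CH^d(X)$; since $c_d(\mathcal{E})=0$ by hypothesis, the long exact sequence attached to $0\to \mathbf{I}^{d+1}(\L)\to \K_d^{\MW}(\L)\to \K_d^{\Mi}\to 0$ yields a lift $\tilde e\in H^d(X,\mathbf{I}^{d+1}(\L))$ of $e(\mathcal{E})$.

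Second, I would unpack $\Psi^1(e(\mathcal{E}))$ using this lift. By construction of the spectral sequence, $\Psi^1(e(\mathcal{E}))\in E(\L,\MW)_\infty^{d,d+1}$ is represented by the image of $\tilde e$ under the surjection $H^d(X,\mathbf{I}^{d+1}(\L))\to H^d(X,\mathbf{I}^{d+1}(\L)/\mathbf{I}^{d+2}(\L))\cong H^d(X,\K_{d+1}^{\Mi}/2)$, the last isomorphism being Voevodsky's resolution of the Milnor conjecture. The indeterminacy is of two kinds: (a) the choice of lift $\tilde e$, controlled by the image of the connecting map $H^{d-1}(X,\K_d^{\Mi})\to H^d(X,\mathbf{I}^{d+1}(\L))$, and (b) the action of higher differentials $d_r$ ($r\geq 3$) landing at $(d,d+1)$. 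Theorem~\ref{thm:main} identifies the composite of the connecting map in (a) with the reduction $H^d(X,\mathbf{I}^{d+1}(\L))\to H^d(X,\K_{d+1}^{\Mi}/2)$ as $(Sq^2 + c_1(\L)\cup)$ precomposed with reduction modulo $2$, producing the exact composite displayed in the statement. The vanishing of the higher differentials into $(d,d+1)$ follows by inspection from the range of vanishing $H^p(X,\K_q^{\Mi}/2) = 0$ for $p+q > d+s$ used in the proof of Corollary~\ref{corintro:finitelymanyobstructions}, since for $r\geq 3$ the source $E_r^{d-r,d+r}$ lies in that vanishing range.

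Finally, (i) and (ii) are immediate from Corollary~\ref{corintro:finitelymanyobstructions}. For $s=1$ the corollary requires only $\Psi^n(\mathcal{E}) = 0$ for $n\leq 0$, so the hypothesis $c_d(\mathcal{E})=0$ by itself implies splitting, and in particular every secondary or higher obstruction vanishes automatically. For $s=2$ one needs $\Psi^0 = \Psi^1 = 0$; the former is supplied by hypothesis, leaving the triviality of $\Psi^1$ as the sole remaining obstruction.

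The substantive technical input — identifying the $d_2$ differential as $Sq^2 + c_1(\L)\cup$ — is exactly the content of Theorem~\ref{thm:main}, around which the paper is built; once it is in hand, the present theorem is essentially a matter of tracking indeterminacies in the spectral sequence and invoking the $2$-cohomological dimension bounds, and the one place requiring care is confirming that the $E_\infty$-subquotient at $(d,d+1)$ coincides with the cokernel described in the statement rather than a further quotient.
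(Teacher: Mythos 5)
Your overall route is the one the paper takes: reduce to $e(\mathcal{E})$ via Morel, apply Theorem~\ref{thm:obstructions}, identify $E(\L,\MW)_\infty^{d,d+1}$ with the cokernel of the displayed composite (the paper does this by citing Lemma~\ref{lem:KMWss} together with Theorem~\ref{thm:main}, and your ``indeterminacy (a)'' analysis is exactly Lemma~\ref{lem:KMWss} unwound for $n=1$, correctly routed through diagram~(\ref{equ:commutative})), and then read off (i) and (ii) from Corollary~\ref{cor:main2} and Theorem~\ref{thm:main2}.

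There is, however, one step that fails as written: your treatment of the higher differentials entering position $(d,d+1)$. In this paper the differentials $d_r$ have bidegree $(1,r-1)$ (Theorem~\ref{thm:pardon}), so the source of a $d_r$ landing at $(d,d+1)$ is $E_r^{d-1,\,d+2-r}$, not $E_r^{d-r,\,d+r}$; you have imported the standard $(r,1-r)$ convention. More importantly, the justification you give does not work even for the term you name: the cohomological-dimension vanishing would require $(d-r)+(d+r)=2d>d+s$, i.e.\ $d>s$, which is not among the hypotheses. The correct (and immediate) reason is the truncation of the spectral sequence: $E_2^{p,q}=0$ for $q<d$, and $d+2-r\le d-1<d$ for $r\ge 3$, so all incoming higher differentials vanish identically. (For completeness one should also note that nothing leaves $(d,d+1)$, since the targets sit in cohomological degree $d+1>\dim X$; you flag this but leave it unresolved.) With that repair the argument closes and agrees with the paper's.
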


For the sake of perspective, recall that Bhatwadekar and Sridharan asked whether the only obstruction to splitting a trivial rank $1$ summand off a rank $(2n+1)$ vector bundle $\mathcal{E}$ on a smooth affine $(2n+1)$-fold $X = \Spec A$ is vanishing of a variant of the top Chern class living in a group $E_0(A)$ \cite[Question 7.12]{BhatwadekarSridharan00}.  The group $E_0(A)$ housing their obstruction class is isomorphic to the Chow group of $0$-cycles on $\Spec A$ in some cases; see, e.g., \cite[Remark 3.13 and Theorem 5.5]{BhatwadekarSridharan99}.  It is an open problem whether the group $E_0(A)$ is isomorphic to the Chow group of zero cycles in general.  A natural byproduct of their question is whether (or, perhaps, when) vanishing of the top Chern class is sufficient to guarantee that $\mathcal{E}$ splits off a free rank $1$ summand. In view of Theorem \ref{thm:main2}, the sufficiency of the vanishing of the top Chern class is equivalent to all the higher obstructions vanishing, which from our point of view seems rather unlikely.  Nevertheless, Bhatwadekar, Das and Mandal have shown that when $k = \real$, there are situations when vanishing of the top Chern class is sufficient to guarantee splitting \cite[Theorem 4.30]{BhatwadekarDasMandal06}.

\begin{remintro}
Throughout this paper, we will assume that $k$ has characteristic unequal to $2$, but a result can be established if $k$ has characteristic $2$ as well.  Indeed, one can first establish a much stronger version of Corollary \ref{corintro:finitelymanyobstructions}.  More precisely, suppose $k$ is a perfect field having characteristic $2$.  If $X$ is a smooth $k$-scheme of dimension $d$, and $\xi: \mathcal{E} \to X$ is a rank $d$ vector bundle on $X$, then $e(\xi) = 0$ if and only if $c_d(\xi) = 0$.  Establishing this result requires somewhat different arguments, and we will write a complete proof elsewhere.
\end{remintro}

\subsubsection*{Preliminaries}
When mentioning motivic cohomology, we will assume $k$ is perfect.  Thus, for simplicity, the reader can assume that $k$ is perfect and has characteristic unequal to $2$ throughout the paper.  The proof of Theorem \ref{thm:main} in positive characteristic depends on the main result of the preprint \cite{HKO}, which, at the time of writing, depends on several other pieces of work that are still only available in preprint form.  We refer the reader to \cite{FaselChowWitt} for results regarding Chow-Witt theory, \cite{MVW} for general properties of motivic cohomology, and \cite{MV} for results about ${\mathbb A}^1$-homotopy theory.

We will consider cohomology of strictly $\aone$-invariant sheaves on a smooth scheme $X$ (see Section \ref{sec:sheaves} for some recollections about the sheaves considered in this paper).  In the introduction, we considered these sheaves on the small Nisnevich site of $X$, but below we will consider only sheaves in the Zariski topology. By, e.g., \cite[Corollary 5.43]{MField} the cohomology of a strictly $\aone$-invariant sheaf computed in the Zariski topology coincides with cohomology computed in the Nisnevich topology.

\subsubsection*{Acknowledgements}
We thank Burt Totaro for a discussion related to the proof of Theorem \ref{thm:main}. We would also like to thank the referees for their thorough reading of the first version of this paper and a number of useful remarks.

\section{A modification of the Pardon spectral sequence}
In this section, we recall the definition of twisted Milnor-Witt K-theory sheaves and various relatives. We then describe a standard filtration on twisted Milnor-Witt K-theory sheaves and analyze the associated spectral sequence.

\subsection{Unramified powers of the fundamental ideal and related sheaves}
\label{sec:sheaves}
Let $k$ be a field of characteristic different from $2$ and let $\mathrm{Sm}_k$ be the category of schemes that are separated, smooth and have finite type over $\Spec (k)$.  Let $\mathbf{W}$ be the (Zariski) sheaf on $\mathrm{Sm}_k$ associated with the presheaf $X\mapsto W(X)$, where $W(X)$ is the Witt group of $X$ (\cite{Knebusch76}, \cite{Knus91}). If $X$ is a smooth connected $k$-scheme, then the restriction of ${\mathbf W}$ to the small Zariski site of $X$ admits an explicit flasque resolution, the so called Gersten-Witt complex $C(X,\mathbf{W})$ (\cite{Balmer02}, \cite{Balmer02b}):
\[
\xymatrix@C=1.2em{W(k(X))\ar[r] & \displaystyle{\bigoplus_{x\in X^{(1)}} W_{fl}(k(x))}\ar[r]^-{d_1} &\displaystyle{\bigoplus_{x\in X^{(2)}} W_{fl}(k(x)) }\ar[r]^-{d_2} & \displaystyle{\bigoplus_{x\in X^{(3)}} W_{fl}(k(x)) }\ar[r] & \ldots.
}
\]
Here, $W_{fl}(k(x))$ denotes the Witt group of finite length $\mathcal{O}_{X,x}$-modules (\cite{Pardon82},\cite{Barge87}), which is a free $W(k(x))$-module of rank one.

For any $n\in \Z$, let $I^n(k(x))\subset W(k(x))$ be the $n$-th power of the fundamental ideal (with the convention that $I^n(k(x))=W(k(x))$ if $n\leq 0$) and let $I^n_{fl}(k(x)):=I^n(k(x))\cdot W_{fl}(k(x))$. The differentials $d_i$ of the Gersten-Witt complex respect the subgroups $I_{fl}^n(k(x))$ in the sense that $d_i(I_{fl}^n(k(x)))\subset I^{n-1}_{fl}(k(y))$ for any $i\in\N$, $x\in X^{(i)}$, $y\in X^{(i+1)}$ and $n\in\Z$ (\cite{Gille07b},\cite[Lemme 9.2.3]{FaselChowWitt}).  This yields a Gersten-Witt complex $C(X,\mathbf{I}^j)$:
\[
\xymatrix@C=1.2em{I^j(k(X))\ar[r] & \displaystyle{\bigoplus_{x\in X^{(1)}} I^{j-1}_{fl}(k(x))}\ar[r]^-{d_1} &\displaystyle{\bigoplus_{x\in X^{(2)}} I^{j-2}_{fl}(k(x)) }\ar[r] & \displaystyle{\bigoplus_{x\in X^{(3)}} I^{j-2}_{fl}(k(x)) }\ar[r] & \ldots
}
\]
for any $j\in\Z$ which provides a flasque resolution of the sheaf $\mathbf{I}^j$, i.e., the sheaf associated with the presheaf $X\mapsto H^0(C(X,\mathbf{I}^j))$.  There is an induced filtration of the sheaf $\mathbf{W}$ by subsheaves of the form:
\[
\ldots\subset \mathbf{I}^j\subset \mathbf{I}^{j-1}\subset\ldots\subset \mathbf{I}\subset \mathbf{W};
\]
the successive quotients are usually given special notation: $\overline{\mathbf{I}}^j:=\mathbf{I}^j/\mathbf{I}^{j+1}$ for any $j\in\N$.

The exact sequence of sheaves
\[
0 \longrightarrow \mathbf{I}^{j+1} \longrightarrow \mathbf{I}^j \longrightarrow \overline{\mathbf{I}}^j \longrightarrow 0
\]
yields an associated flasque resolution of $\overline{\mathbf{I}}^j$ by complexes $C(X,\overline{\mathbf{I}}^j)$ \cite[proof of Theorem 3.24]{Fasel07} of the form:
\[
\xymatrix@C=1.2em{\overline{I}^j(k(X))\ar[r] & \displaystyle{\bigoplus_{x\in X^{(1)}} \overline{I}^{j-1}(k(x))}\ar[r]^-{d_1} &\displaystyle{\bigoplus_{x\in X^{(2)}} \overline{I}^{j-2}(k(x)) }\ar[r] & \displaystyle{\bigoplus_{x\in X^{(3)}} \overline{I}^{j-2}(k(x)) }\ar[r] & \ldots.
}
\]
The subscript $fl$ appearing in the notation above has been dropped in view of the canonical isomorphism
\[
\overline{I}^j(k(x)):=I^j(k(x))/I^{j+1}(k(x)) \longrightarrow  I^j_{fl}(k(x))/I^{j+1}_{fl}(k(x))=:\overline{I}^j_{fl}(k(x))
\]
induced by any choice of a generator of $W_{fl}(k(x))$ as $W(k(x))$-module (\cite[Lemme E.1.3, Proposition E.2.1]{FaselChowWitt}).

Suppose now that $X$ is a smooth $k$-scheme and $\mathcal L$ is a line bundle on $X$.  One may define the sheaf $\mathbf{W}(\mathcal L)$ on the category of smooth schemes over $X$ as the sheaf associated with the presheaf $\{f:Y\to X\}\to W(Y,f^*\mathcal L)$, where the latter is the Witt group of the exact category of coherent locally free $\mathcal O_X$-modules equipped with the duality $\mathrm{Hom}_{\mathcal O_X}(\_,\mathcal L)$.  The constructions above extend to this ``twisted" context and we obtain sheaves $\mathbf{I}^j(\mathcal L)$ for any $j\in\Z$ and flasque resolutions of these sheaves by complexes that will be denoted $C(X,\mathbf{I}^j(\mathcal L))$.

There are canonical isomorphisms $\overline {\mathbf I}^j=\mathbf{I}^j(\mathcal L)/\mathbf{I}^{j+1}(\mathcal L)$ and we thus obtain a filtration $\ldots\subset \mathbf{I}^j(\mathcal L)\subset \mathbf{I}^{j-1}(\mathcal L)\subset\ldots\subset \mathbf{I}(\mathcal L)\subset \mathbf{W}(\mathcal L)$ and long exact sequences
\begin{equation}\label{equ:Ij}
0 \longrightarrow \mathbf{I}^{j+1}(\mathcal L) \longrightarrow \mathbf{I}^j(\mathcal L) \longrightarrow \overline{\mathbf{I}}^j \longrightarrow 0.
\end{equation}

Let $\mathcal{F}_k$ be the class of finitely generated field extensions of $k$. As usual, write $K_n^{\Mi}(F)$ for the $n$-th Milnor $K$-theory group as defined in \cite{Milnor69} (with the convention that $K_n^{\Mi}(F)=0$ if $n<0$).   The assignment $F \mapsto K_n^{\Mi}(F)$ defines a cycle module in the sense of \cite[Definition 2.1]{Rost96}. We denote by $\K_n^{\Mi}$ the associated Zariski sheaf (\cite[Corollary 6.5]{Rost96}), which has an explicit Gersten resolution by flasque sheaves (\cite[Theorem 6.1]{Rost96}). The same ideas apply for Milnor $K$-theory modulo some integer and, in particular, we obtain a sheaf $\K_n^{\Mi}/2$.

For any $F\in\mathcal{F}_k$ and any $n\in\N$, there is a surjective homomorphism $s_n:K_n^{\Mi}(F)/2\to \overline I^n(F)$ which, by the affirmation of the Milnor conjecture on quadratic forms \cite{OVV}, is an isomorphism.  The homomorphisms $s_n$ respect residue homomorphisms with respect to discrete valuations (e.g. \cite[Proposition 10.2.5]{FaselChowWitt}) and thus induce isomorphisms of sheaves $\K_n^{\Mi}/2\to \overline{\mathbf{I}}^n$ for any $n\in\N$.

For any $n\in\Z$, the $n$-th Milnor-Witt $K$-theory sheaf $\K_n^{\MW}$ can (and will) be defined as the fiber product
\[
\xymatrix{\K_n^{\MW}\ar[r]\ar[d] & \mathbf{I}^n\ar[d] \\
\K_n^{\Mi}\ar[r] & \overline{\mathbf I}^n}
\]
where the bottom horizontal morphism is the composite $\K_n^{\Mi}\to \K_n^{\Mi}/2\stackrel {s_n}\to \overline{\mathbf I}^n$ and the right-hand vertical morphism is the quotient morphism. It follows from \cite[Th\'eor\`eme 5.3]{Morel04} that this definition coincides with the one given in \cite[\S 3.2]{MField}.

If $\mathcal L$ is a line bundle on some smooth scheme $X$, then we define the $\L$-twisted sheaf $\K_n^{\MW}(\mathcal L)$ on the small Zariski site of $X$ analogously using $\L$-twisted powers of the fundamental ideal. Again, the resulting sheaf has an explicit flasque resolution obtained by taking the fiber products of the flasque resolutions mentioned above (\cite[Theorem 3.26]{Fasel07}), or by using the Rost-Schmid complex of \cite[\S 5]{MField}.  The above fiber product square yields a commutative diagram of short exact sequences of the following form:
\begin{equation}\label{equ:commutative}
\xymatrix{0\ar[r] & \mathbf{I}^{n+1}(\mathcal L)\ar[r]\ar@{=}[d] & \K_n^{\MW}(\mathcal L)\ar[r]\ar[d] & \K_n^{\Mi}\ar[r]\ar[d] & 0 \\
0\ar[r] & \mathbf{I}^{n+1}(\mathcal L)\ar[r] & \mathbf{I}^n(\mathcal L)\ar[r] & \overline{\mathbf{I}}^n\ar[r] & 0.}
\end{equation}

\subsection{The Pardon spectral sequence}\label{sec:Pardon}
Continuing to assume $k$ is a field having characteristic unequal to $2$, let $X$ be a smooth $k$-scheme and suppose $\mathcal L$ is a line bundle over $X$.   The filtration
\[
\ldots\subset \mathbf{I}^j(\mathcal L)\subset \mathbf{I}^{j-1}(\mathcal L)\subset\ldots\subset \mathbf{I}(\mathcal L)\subset \mathbf{W}(\mathcal L)
\]
yields a spectral sequence that we will refer to as the \emph{Pardon spectral sequence}.  We record the main properties of this spectral sequence here, following the formulation of \cite[Theorem 1.1]{Totaro}.

\begin{thm}\label{thm:pardon}
Assume $k$ is a field having characteristic unequal to $2$, $X$ is a smooth $k$-scheme, and $\L$ is a line bundle on $X$.  There exists a spectral sequence $E(\mathcal L)_2^{p,q}=H^p(X,\overline{\mathbf{I}}^q)\Rightarrow H^p(X,\mathbf{W}(\mathcal L))$. The differentials $d({\mathcal L})_r$ are of bidegree $(1,r-1)$ for $r\geq 2$, and the groups $H^p(X,\overline{\mathbf I}^q)$ are trivial unless $0\leq p\leq q$.  There are identifications $H^p(X,\overline{\mathbf I}^p)=CH^p(X)/2$ and the differential $d_2^{pp}:H^p(X,\sI^p)\to H^{p+1}(X,\sI^{p+1})$ coincides with the Steenrod square operation $Sq^2$ as defined by Voevodsky (\cite{VRed}) and Brosnan (\cite{Brosnan03}) when $\mathcal L$ is trivial. Finally, if $k$ has finite $2$-cohomological dimension, the spectral sequence is bounded.
\end{thm}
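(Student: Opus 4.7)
The plan is to apply the standard exact-couple construction to the decreasing filtration $\{\mathbf I^q(\mathcal L)\}_q$ of $\mathbf W(\mathcal L)$, whose successive quotients $\overline{\mathbf I}^q$ are recorded in \eqref{equ:Ij}, and then read off the remaining assertions from known facts about the associated graded.

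First I would take Zariski cohomology of the short exact sequences \eqref{equ:Ij} to obtain long exact sequences, and assemble them into an exact couple with $D_1^{p,q} = H^p(X,\mathbf I^q(\mathcal L))$ and $E_1^{p,q} = H^p(X,\overline{\mathbf I}^q)$. After the standard reindexing that places $H^p(X,\overline{\mathbf I}^q)$ on the $E_2$-page, the derived spectral sequence has $d_r$ of bidegree $(1,r-1)$, as required. This is the $\mathcal L$-twisted analogue of Pardon's spectral sequence \cite[0.13]{PardonGW}, and it converges to $H^\ast(X,\mathbf W(\mathcal L))$ as soon as the filtration on the abutment is bounded.

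Next I would use the Milnor-conjecture isomorphism $\overline{\mathbf I}^q \cong \K_q^{\Mi}/2$ recalled in Section \ref{sec:sheaves} to reduce the $E_2$-page to mod-$2$ Milnor K-cohomology. The Gersten resolution of $\K_q^{\Mi}/2$ on the smooth scheme $X$ has length at most $q$, so $H^p(X,\overline{\mathbf I}^q) = 0$ for $p>q$. For the diagonal term, the short exact sequence $0 \to \K_p^{\Mi} \overset{2}{\to} \K_p^{\Mi} \to \K_p^{\Mi}/2 \to 0$ combined with Rost's identification $H^p(X,\K_p^{\Mi}) = CH^p(X)$ and the vanishing $H^{p+1}(X,\K_p^{\Mi}) = 0$ yields $H^p(X,\overline{\mathbf I}^p) = CH^p(X)/2$.

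The only non-formal input is the identification of $d_2^{p,p}$ with Voevodsky's $Sq^2$, and I expect this to be the main obstacle in a from-scratch proof; however, it is already established for trivial $\mathcal L$ by Totaro as \cite[Theorem 1.1]{Totaro}, and since our construction specialized to trivial $\mathcal L$ agrees with his, the result can be imported directly. For the final assertion, if $\mathrm{cd}_2(k) < \infty$ then the vanishing of $H^p(X,\K_q^{\Mi}/2)$ for fixed $p$ and $q$ sufficiently large, established in \cite{AsokFaselThreefolds} via the norm-residue theorem, kills the $E_2$-page in large $q$; propagating this vanishing downward in $q$ along the long exact sequences attached to \eqref{equ:Ij} by induction yields $H^p(X,\mathbf I^q(\mathcal L)) = 0$ for $q\gg 0$, which bounds the filtration on $H^\ast(X,\mathbf W(\mathcal L))$ and hence the spectral sequence.
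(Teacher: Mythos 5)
Your construction and division of labor match the paper's: the paper proves this theorem by citing \cite[proof of Theorem 1.1]{Totaro} for everything except the last assertion (the twisted filtration and its spectral sequence, the vanishing range, the diagonal identification, and the $Sq^2$ statement) and \cite[Proposition 5.1]{AsokFaselThreefolds} for boundedness, which is exactly the pair of inputs you invoke; your reconstruction of the vanishing range and of $H^p(X,\overline{\mathbf{I}}^p)=CH^p(X)/2$ from the Gersten resolutions and Rost's formula is the standard argument underlying Totaro's statement. (One small caution there: multiplication by $2$ on $\K_p^{\Mi}$ is not injective as a map of sheaves, since Milnor $K$-groups of fields have $2$-torsion; the clean route is to read off $H^p(X,\K_p^{\Mi}/2)$ as the cokernel of the last map of the mod $2$ Gersten complex, which is the mod $2$ reduction of the integral one, and use right-exactness of reduction mod $2$.)

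The one step that does not work as written is the final "propagating the vanishing downward in $q$" to conclude $H^p(X,\mathbf{I}^q(\mathcal L))=0$ for $q\gg 0$. From $H^\ast(X,\overline{\mathbf{I}}^{q})=0$ for all $q\geq q_0$, the long exact sequences attached to \eqref{equ:Ij} only yield that the maps $H^p(X,\mathbf{I}^{q+1}(\mathcal L))\to H^p(X,\mathbf{I}^q(\mathcal L))$ are isomorphisms for $q\geq q_0$, i.e. that the groups stabilize; they cannot force the stable value to be zero without a base case at some large $q$, which the induction does not supply. The correct input is sheaf-level: if $\mathrm{cd}_2(k)=s$ and $x\in X^{(i)}$, then $\mathrm{cd}_2(k(x))\leq s+\dim X-i$, so by the resolution of the Milnor conjecture together with the Arason--Pfister Hauptsatz one has $I^{q-i}(k(x))=0$ once $q>s+\dim X$; hence the entire Gersten--Witt complex $C(X,\mathbf{I}^q(\mathcal L))$ vanishes for $q\gg 0$, and a fortiori so does $H^p(X,\mathbf{I}^q(\mathcal L))$. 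This is what the cited proposition of \cite{AsokFaselThreefolds} packages. Note also that for the literal claim being proved ("the spectral sequence is bounded"), the vanishing of the $E_2$-terms for $q\gg 0$, which you do establish correctly, already suffices; the vanishing of $H^p(X,\mathbf{I}^q(\mathcal L))$ is only needed to make the convergence to $H^p(X,\mathbf{W}(\mathcal L))$ unconditional.
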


\begin{proof}
All the statements are proved in \cite[proof of Theorem 1.1]{Totaro} except the last one, which follows from the cohomology vanishing statement contained in \cite[Proposition 5.1]{AsokFaselThreefolds}.
\end{proof}

\begin{rem}
We will describe the differential $d(\mathcal L)_2^{pp}:H^p(X,\sI^p)\to H^{p+1}(X,\sI^{p+1})$ for $\mathcal L$ nontrivial in Theorem \ref{thm:twistplus}.
\end{rem}

Since $\mathbf{W}(\L) = \mathbf{I}^0(\L)$ by convention, truncating the above filtration allows us to construct a spectral sequence abutting to the cohomology of $\mathbf{I}^j(\L)$ for arbitrary $j \geq 0$:
\[
\ldots\subset \mathbf{I}^{n+j}(\mathcal L)\subset \mathbf{I}^{n+j-1}(\mathcal L)\subset\ldots\subset \mathbf{I}^{j+1}(\mathcal L)\subset \mathbf{I}^j(\mathcal L).
\]
The resulting spectral sequence $E(\mathcal L,j)^{p,q}$ is very similar to the Pardon spectral sequence. Indeed, $E(\mathcal L,j)_2^{p,q}=0$ if $q<j$ and $E(\mathcal L,j)_2^{p,q}=E(\mathcal L)_2^{p,q}$ otherwise.  Similarly $d(\mathcal L,j)_2^{p,q}=0$ if $q<j$ and $d(\mathcal L,j)_2^{p,q}=d(\mathcal L)_2^{p,q}$ otherwise.  We call this spectral sequence the \emph{$j$-truncated Pardon spectral sequence} and it will be one of the main objects of study in this paper.  Using the description of the $E_2$-page of this spectral sequence and the associated differentials, the proof of the following lemma is straightforward (and left to the reader).

\begin{lem}\label{lem:truncatedss}
Assume $k$ is a field having characteristic unequal to $2$ and suppose $X$ is a smooth $k$-scheme of dimension $d$.  There are identifications $E(\mathcal L,d)^{d,d}_{\infty}=CH^d(X)/2$ and, for any $n\geq 1$, $E(\mathcal L,d)^{d,d+n}_{m}=E(\mathcal L)^{d,d+n}_m$ if $m\leq n+1$ and exact sequences
\[
\xymatrix@C=4em{E(\mathcal L)_{n+1}^{d-1,d}\ar[r]^{d(\mathcal L)_{n+1}^{d-1,d}} & E(\mathcal L)_{n+1}^{d,d+n}\ar[r] & E(\mathcal L,d)_{\infty}^{d,d+n}\ar[r] & 0.}
\]
\end{lem}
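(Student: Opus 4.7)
The plan is to compare the truncated spectral sequence $E(\mathcal L, d)$ with the Pardon spectral sequence $E(\mathcal L)$ via the natural morphism $\phi \colon E(\mathcal L, d) \to E(\mathcal L)$ induced by inclusion of filtered sheaves; at $E_2$, $\phi$ is the identity for $q \geq d$ and zero (with trivial source) for $q < d$. Throughout I use that $\overline{\mathbf I}^q$ is strictly $\aone$-invariant, so $H^p(X, \overline{\mathbf I}^q) = 0$ whenever $p > d = \dim X$.

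For the first assertion, at position $(d,d)$ every outgoing $d_r$ lands in $H^{d+1}(X, \overline{\mathbf I}^{d+r-1}) = 0$, and every incoming $d_r$ with $r \geq 2$ originates at $(d-1, d-r+1)$, whose second coordinate $d-r+1$ is strictly less than $d$, hence vanishes in the truncated SS by construction. Therefore $E(\mathcal L, d)^{d,d}_\infty = E_2^{d, d} = CH^d(X)/2$ by Theorem \ref{thm:pardon}.

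For the remaining assertions, at $(d, d+n)$ with $n \geq 1$ outgoing differentials again vanish by the dimension bound, while incoming $d_r$ has source at $(d-1, d+n-r+1)$, whose second coordinate is $\geq d$ precisely when $r \leq n+1$. For $r \leq n+1$ the differential is determined entirely by data in the $q \geq d$ region where the two spectral sequences coincide; a short induction on $r$ using naturality of $\phi$ shows $\phi^{d, d+n}_m$ is an isomorphism for $m \leq n+1$, giving the second assertion. For $r \geq n+2$ the incoming differential vanishes in the truncated SS (source in the truncated-away region), so $E(\mathcal L, d)^{d, d+n}_\infty = E(\mathcal L, d)^{d, d+n}_{n+2}$ is the cokernel of the truncated differential $d(\mathcal L, d)^{d-1, d}_{n+1}$.

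The main delicate step is identifying this cokernel with the cokernel of the untruncated $d(\mathcal L)^{d-1, d}_{n+1}$. Although the domains $E(\mathcal L, d)^{d-1, d}_{n+1}$ and $E(\mathcal L)^{d-1, d}_{n+1}$ genuinely differ (the former is a subgroup, the latter a subquotient of $H^{d-1}(X, \overline{\mathbf I}^d)$), the exact couple description shows that $\phi^{d-1, d}_{n+1}$ is surjective: $E_{n+1}$-cycles at $(d-1, d)$ are characterized by the same lifting condition (to $H^{d-1}(X, \mathbf I^{d+n})$) in both spectral sequences, whereas the boundaries, given by images of incoming differentials from the truncated-away region, exist only in the untruncated SS. Combining surjectivity of $\phi^{d-1, d}_{n+1}$ with naturality of $d_{n+1}$ and the fact that $\phi^{d, d+n}_{n+1}$ is the identity gives $\mathrm{im}(d(\mathcal L, d)^{d-1, d}_{n+1}) = \mathrm{im}(d(\mathcal L)^{d-1, d}_{n+1})$, yielding the exact sequence of the third assertion.
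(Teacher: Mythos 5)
Your proposal is correct and is exactly the routine verification the paper leaves to the reader: compare the two spectral sequences via the morphism induced by the truncation, use $H^{p}(X,\bar{\mathbf{I}}^q)=0$ for $p>d$ to kill outgoing differentials, and observe that images of differentials depend only on cycle groups, so the extra boundaries present in the untruncated sequence at $(d-1,d)$ do not change the cokernel. (Only a small indexing slip: the lifting condition characterizing $E_{n+1}$-cycles at $(d-1,d)$ is that the connecting image lifts to $H^{d}(X,\mathbf{I}^{d+n})$, not $H^{d-1}$.)
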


Using the monomorphism $\mathbf{I}^{j+1}(\mathcal L)\subset \K_j^{\MW}(\mathcal L)$ described in the previous section, we can consider the filtration of $\mathbf{I}^{j+1}(\mathcal L)$ as a filtration of $\K_j^{\MW}(\mathcal L)$ of the form:
\[
\ldots\subset \mathbf{I}^{n+j}(\mathcal L)\subset \mathbf{I}^{n+j-1}(\mathcal L)\subset\ldots\subset \mathbf{I}^{j+1}(\mathcal L)\subset \K_j^{\MW}(\mathcal L).
\]
Once again, the spectral sequence $E(\mathcal L,\MW)^{p,q}$ associated with this filtration is very similar to the $j$-truncated Pardon spectral sequence.  Indeed, there are identifications $E(\mathcal L,\MW)_2^{p,q}=E(\mathcal L,j)_2^{p,q}$ if $q\neq j$ and $E(\mathcal L,\MW)_2^{p,j}=H^p(X,\K_j^{\Mi})$.  In order to describe the terms $E(\mathcal L,\MW)_{\infty}^{j,q}$ in the situation of interest, we first need a few definitions.

Consider the commutative diagram of sheaves with exact rows from Diagram \ref{equ:commutative}
\[
\xymatrix@C=2em{0\ar[r] & \mathbf{I}^{j+1}(\mathcal L)\ar[r]\ar@{=}[d] & \K_j^{\MW}(\mathcal L)\ar[r]\ar[d] & \K_j^{\Mi}\ar[r]\ar[d] & 0 \\
0\ar[r] & \mathbf{I}^{j+1}(\mathcal L)\ar[r] & \mathbf{I}^{j}(\mathcal L)\ar[r] & \sI^j\ar[r] & 0.}
\]
The right vertical homomorphism $\K^{\Mi}_j \to \sI^j$ is described in the previous subsection and yields, in particular, a homomorphism $H^{j-1}(X,\K_j^{\Mi})\to H^{j-1}(X,\sI^j)$ whose image we denote by $G_2(j)$.  Now, $H^{j-1}(X,\sI^j)=E(\mathcal L,j)_2^{j-1,j}=E(\mathcal L)_2^{j-1,j}$ and there is a differential
\[
d(\mathcal L)_2^{j-1,j}:E(\mathcal L)_2^{j-1,j} \longrightarrow E(\mathcal L)_2^{j,j+1}.
\]
We set $G_3(j):=G_2(j)\cap \mathrm{ker}(d(\mathcal L)_2^{j-1,j})$ and write $\overline G_3(j)$ for its image in $E(\mathcal L)_3^{j-1,j}$. There is also a differential
\[
d(\mathcal L)_3^{j-1,j}:E(\mathcal L)_3^{j-1,j} \longrightarrow E(\mathcal L)_3^{j,j+2}
\]
and we set $G_4(j):=\overline G_3(j)\cap \mathrm{ker}(d(\mathcal L)_3^{j-1,j})$ and define $\overline G_4(j)$ to be its image in $E(\mathcal L)_4^{j-1,j}$.
Continuing inductively, we can define a sequence of subgroups $\overline G_{n}(j)\subset E(\mathcal L)_n^{j-1,j}$ for any $n\geq 2$.

\begin{lem}\label{lem:KMWss}
If $k$ is a field having characteristic unequal to $2$, and $X$ is a smooth $k$-scheme of dimension $d$, then there are isomorphisms $E(\mathcal L,\MW)^{d,d}_{\infty}=CH^d(X)$, and $E(\mathcal L,\MW)^{d-1,d}_2=H^{d-1}(X,\K_d^{M})$. Furthermore, for any integer $n\geq 1$, there are identifications $E(\mathcal L,\MW)^{d,d+n}_{m}=E(\mathcal L)^{d,d+n}_m$ if $m\leq n+1$ and exact sequences of the form
\[
\xymatrix@C=4em{\overline G_{n+1}(d)\ar[r]^-{d(\mathcal L)_{n+1}^{d-1,d}} & E(\mathcal L)^{d,d+n}_{n+1}\ar[r] & E(\mathcal L,\MW)^{d,d+n}_{\infty}\ar[r] & 0.}
\]
\end{lem}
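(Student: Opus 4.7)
The plan is to split the proof into three parts: identifying the $E_2$ page and the spot $(d, d)$, comparing with the Pardon spectral sequence at the spots $(d, d+n)$ for pages $m \leq n+1$, and deriving the exact sequence at $E_\infty$.

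First, read the $E_2$ page off the graded pieces of the filtration: the right column of Diagram \eqref{equ:commutative} identifies $\K_d^{\MW}(\mathcal L)/\mathbf{I}^{d+1}(\mathcal L) \cong \K_d^{\Mi}$, while $\mathbf{I}^q(\mathcal L)/\mathbf{I}^{q+1}(\mathcal L) \cong \overline{\mathbf I}^q$ for $q \geq d+1$. Hence $E(\mathcal L, \MW)_2^{p, q} = H^p(X, \overline{\mathbf I}^q)$ for $q \geq d+1$ and $E(\mathcal L, \MW)_2^{p, d} = H^p(X, \K_d^{\Mi})$, yielding in particular $E(\mathcal L, \MW)_2^{d-1, d} = H^{d-1}(X, \K_d^{\Mi})$. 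At $(d, d)$, Rost's formula gives $E_2^{d, d} = H^d(X, \K_d^{\Mi}) = CH^d(X)$, and both incoming and outgoing differentials vanish at every page $r \geq 2$: outgoing differentials target $(d+1, \cdot)$ which is zero since $p+1 > \dim X$, while incoming ones source at $(d-1, d-r+1)$ with $q < d$, outside the support of the spectral sequence. Thus $E(\mathcal L, \MW)_\infty^{d, d} = CH^d(X)$.

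For $(d, d+n)$ with $n \geq 1$, the idea is to use the map of spectral sequences $\phi \colon E(\mathcal L, \MW) \to E(\mathcal L)$ induced by the surjection of filtered sheaves $\K_d^{\MW}(\mathcal L) \twoheadrightarrow \mathbf{I}^d(\mathcal L)$, which is the identity on $\mathbf{I}^{d+1}(\mathcal L)$ and the reduction modulo $2$ on the top quotient $\K_d^{\Mi} \twoheadrightarrow \overline{\mathbf I}^d$. At $E_2$, $\phi$ is the identity for $q \geq d+1$. By induction on $m$ one shows $\phi$ identifies $E(\mathcal L, \MW)_m^{d, d+n}$ with $E(\mathcal L)_m^{d, d+n}$ for $m \leq n+1$: outgoing differentials from $(d, d+n)$ vanish by dimension, so the step reduces to comparing the images of the incoming differentials from $(d-1, d+n-m+2)$. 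For $m-1 \leq n$ the source sits in the region $q \geq d+1$, and a parallel induction on the page shows $\phi$ remains surjective on every spot with $q \geq d+1$ (the only obstruction to lifting a kernel element through $\phi$ lies in $\ker \phi$ on the target of the differential, which is harmless when $\phi$ is an isomorphism there, as at $(d, d+n)$ by the main induction). Combining source-surjectivity of $\phi$ with the inductive target-isomorphism, together with the fact that $\phi$ commutes with differentials, forces the images of the two differentials to agree in the common target, hence identifies the cokernels.

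At page $n+1$, the incoming differential into $(d, d+n)$ finally has source $(d-1, d)$ in the $q = d$ row, where $\phi$ is only the reduction modulo $2$. Tracking $\phi\bigl(E(\mathcal L, \MW)_r^{d-1, d}\bigr) \subseteq E(\mathcal L)_r^{d-1, d}$ through successive pages: it starts as $G_2(d)$ at $r = 2$ and at each step is obtained by intersecting with the kernel of the next Pardon differential and projecting --- precisely the inductive definition of $\overline G_r(d)$ preceding the lemma. Hence $\phi\bigl(E(\mathcal L, \MW)_{n+1}^{d-1, d}\bigr) = \overline G_{n+1}(d)$, and the image of the MW differential into $(d, d+n)$, identified via $\phi$ with a subgroup of $E(\mathcal L)_{n+1}^{d, d+n}$, coincides with $d(\mathcal L)_{n+1}^{d-1, d}\bigl(\overline G_{n+1}(d)\bigr)$. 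For $r \geq n+2$, further incoming differentials source at $(d-1, d+n-r+1)$ with $q < d$, outside the support of $E(\mathcal L, \MW)$, and vanish. Therefore $E(\mathcal L, \MW)_\infty^{d, d+n}$ is the cokernel of the map $\overline G_{n+1}(d) \to E(\mathcal L)_{n+1}^{d, d+n}$, establishing the exact sequence. The main obstacle is the bookkeeping establishing the surjectivity propagation of $\phi$ on the region $q \geq d+1$ and the inductive identification $\phi\bigl(E(\mathcal L, \MW)_r^{d-1, d}\bigr) = \overline G_r(d)$; both are diagram chases intertwining the two spectral sequences across multiple pages.
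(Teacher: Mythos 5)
Your proposal is correct and takes essentially the same route as the paper: the paper's (very terse) proof likewise uses the morphism of filtered sheaves $\K_d^{\MW}(\mathcal L)\to\mathbf{I}^d(\mathcal L)$, which is the identity on all graded pieces except the bottom one where it is $\K_d^{\Mi}\to\sI^d$, and then appeals to Lemma \ref{lem:truncatedss} and the definition of $\overline G_r(d)$. Your write-up just makes explicit the double induction and the identification $\phi\bigl(E(\mathcal L,\MW)_r^{d-1,d}\bigr)=\overline G_r(d)$ that the paper leaves to the reader.
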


\begin{proof}
The morphism of sheaves $\K^{\MW}_d(\mathcal L)\to \mathbf{I}^d(\mathcal L)$ is compatible with the filtrations:
\[
\xymatrix{\ldots\ar[r] & \mathbf{I}^{d+n}(\mathcal L)\ar[r]\ar@{=}[d] & \mathbf{I}^{d+n-1}(\mathcal L)\ar[r]\ar@{=}[d] & \ldots \ar[r] & \mathbf{I}^{d+1}(\mathcal L)\ar@{=}[d]\ar[r] & \K^{\MW}_d(\mathcal L)\ar[d] \\
\ldots\ar[r] & \mathbf{I}^{d+n}(\mathcal L)\ar[r] & \mathbf{I}^{d+n-1}(\mathcal L)\ar[r] & \ldots \ar[r] & \mathbf{I}^{d+1}(\mathcal L)\ar[r] & \mathbf{I}^d(\mathcal L)}
\]
In particular, the induced maps of quotient sheaves are simply the identity map, except at the last spot where they fit into the commutative diagram
\[
\xymatrix@C=2em{0\ar[r] & \mathbf{I}^{d+1}(\mathcal L)\ar[r]\ar@{=}[d] & \K_d^{\MW}(\mathcal L)\ar[r]\ar[d] & \K_d^{\Mi}\ar[r]\ar[d] & 0 \\
0\ar[r] & \mathbf{I}^{d+1}(\mathcal L)\ar[r] & \mathbf{I}^{d}(\mathcal L)\ar[r] & \sI^d\ar[r] & 0}
\]
The result now follows from the definition of the groups $\overline G_i(d)$ and Lemma \ref{lem:truncatedss}.
\end{proof}

\begin{rem}\label{rem:2torsion}
By construction, there are epimorphisms $E(\mathcal L,\MW)_\infty^{d,d+n}\to E(\mathcal L,d)_\infty^{d,d+n}$ for any $n\geq 0$. Indeed, $\overline G_{n+1}(d)$ is, by definition, a subgroup of $E(\mathcal L)_{n+1}^{d-1,d}$ and the diagram
\[
\xymatrix{\overline G_{n+1}(d)\ar[r]\ar[d] & E(\mathcal L)^{d,d+n}_{n+1}\ar@{=}[d] \ar[r]& E(\mathcal L,\MW)_\infty^{d,d+n}\ar[r]\ar@{-->}[d] & 0 \\
E(\mathcal L)_{n+1}^{d-1,d}\ar[r] & E(\mathcal L)^{d,d+n}_{n+1}\ar[r] & E(\mathcal L,d)_\infty^{d,d+n}\ar[r] & 0}
\]
commutes. 

Suppose that $X$ is a smooth $k$-scheme of dimension $d$ such that the Chow group of $0$-cycles $CH^d(X)$ is $2$-torsion free. In that case, we claim that the dotted arrow in the above diagram is an isomorphism. To see this, observe that the exact sequence of sheaves
\[
0 \longrightarrow 2\K_d^{\Mi} \longrightarrow \K_d^{\Mi} \longrightarrow \K_d^{\Mi}/2 \longrightarrow 0
\]
yields an exact sequence
\[
\xymatrix@C=1.2em{H^{d-1}(X,\K_d^{\Mi}) \ar[r] & H^{d-1}(X,\K_d^{\Mi}/2) \ar[r] & H^{d}(X,2\K_d^{\Mi}) \ar[r] & H^{d}(X,\K_d^{\Mi}) \ar[r] & H^d(X,\K_d^{\Mi}/2) \ar[r] & 0.}
\]
The epimorphism $\K_d^{\Mi}\stackrel 2\to 2\K_d^{\Mi}$ yields an isomorphism $H^{d}(X,\K_d^{\Mi})\to H^{d}(X,2\K_d^{\Mi})$ and we deduce the following exact sequence from Rost's formula and the definition of $G_2(d)$:
\[
0 \longrightarrow G_2(d) \longrightarrow H^{d-1}(X,\K_d^{\Mi}/2) \longrightarrow  CH^d(X)\stackrel 2 \longrightarrow CH^d(X) \longrightarrow CH^d(X)/2 \longrightarrow 0.
\]
Since $CH^d(X)$ is $2$-torsion free, it follows that $G_2(d)=H^{d-1}(X,\K_d^{\Mi}/2)$ and by inspection we obtain an identification $\overline G_{n+1}(d)=E(\mathcal L)_{n+1}^{d-1,d}$.  We therefore conclude that the dotted arrow in the above diagram is an isomorphism.
\end{rem}

\begin{thm}\label{thm:obstructions}
Suppose $k$ is a field having characteristic unequal to $2$ and finite $2$-cohomological dimension, $X$ is a smooth $k$-scheme of dimension $d$ and $\mathcal L$ is a line bundle over $X$. For any $\alpha\in H^d(X,\K_d^{\MW}(\mathcal L))$ there are inductively defined obstructions $\Psi^n(\alpha)\in E(\mathcal L,\MW)_{\infty}^{d,d+n}$ for $n\geq 0$ such that $\alpha=0$ if and only if $\Psi^n(\alpha)=0$ for any $n\geq 0$.
\end{thm}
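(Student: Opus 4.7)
The approach is to construct $\Psi^n(\alpha)$ as the $n$-th associated graded piece of $\alpha$ under the natural decreasing filtration on $H^d(X,\K_d^{\MW}(\mathcal L))$ induced by the sheaf filtration $\K_d^{\MW}(\mathcal L) \supset \mathbf{I}^{d+1}(\mathcal L) \supset \mathbf{I}^{d+2}(\mathcal L) \supset \cdots$. Concretely, I set $F^0 := H^d(X,\K_d^{\MW}(\mathcal L))$, and for $n \geq 1$ let $F^n$ be the image of the natural map $H^d(X,\mathbf{I}^{d+n}(\mathcal L)) \to H^d(X,\K_d^{\MW}(\mathcal L))$. By the standard spectral sequence of a filtered sheaf, applied to the flasque resolutions introduced in Section \ref{sec:sheaves}, together with the computation of the $E_\infty$-terms in Lemma \ref{lem:KMWss}, there is a canonical identification $F^n/F^{n+1} \cong E(\mathcal L,\MW)_\infty^{d,d+n}$.

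With this in hand, I define $\Psi^n(\alpha)$ inductively. Let $\Psi^0(\alpha)$ be the image of $\alpha$ in $F^0/F^1 \cong E(\mathcal L,\MW)_\infty^{d,d}$; by Lemma \ref{lem:KMWss} this is nothing other than the image of $\alpha$ under the projection $\K_d^{\MW}(\mathcal L) \to \K_d^{\Mi}$, landing in $CH^d(X)$. Assuming $\Psi^0(\alpha),\ldots,\Psi^{n-1}(\alpha)$ all vanish, the definition forces $\alpha \in F^n$, and I declare $\Psi^n(\alpha)$ to be the class of $\alpha$ in $F^n/F^{n+1} \cong E(\mathcal L,\MW)_\infty^{d,d+n}$. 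Because the groups $F^n$ are intrinsic subgroups of $H^d(X,\K_d^{\MW}(\mathcal L))$, this construction involves no auxiliary choices, and the implication $\alpha = 0 \Rightarrow \Psi^n(\alpha) = 0$ for every $n$ is immediate.

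For the converse, the vanishing of every $\Psi^n(\alpha)$ places $\alpha$ in $\bigcap_{n \geq 0} F^n$, and what remains is to show this intersection is zero. This is where the finite $2$-cohomological dimension hypothesis enters: by Theorem \ref{thm:pardon} (boundedness, via \cite[Proposition 5.1]{AsokFaselThreefolds}), the Pardon spectral sequence is bounded, and the same vanishing applies to $E(\mathcal L,\MW)$, which differs from the $d$-truncated Pardon spectral sequence only in the $q = d$ row (cohomologically bounded for dimension reasons). Boundedness of the spectral sequence yields that the filtration $F^{\bullet}$ on the abutment has finite length, so $F^N = 0$ for some $N$, forcing $\alpha \in F^N = 0$. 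The only nontrivial content in the argument is this boundedness step, which is already assembled from the cited sources; everything else is formal spectral-sequence bookkeeping, so there is no real obstacle beyond bundling these pieces together.
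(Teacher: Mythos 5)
Your proposal is correct and follows essentially the same route as the paper: both define $F^n$ as the image of $H^d(X,\mathbf{I}^{d+n}(\mathcal L))$ in $H^d(X,\K_d^{\MW}(\mathcal L))$, identify the graded pieces with $E(\mathcal L,\MW)_\infty^{d,d+n}$, take $\Psi^n(\alpha)$ to be the successive images of $\alpha$, and invoke the vanishing of \cite[Proposition 5.1]{AsokFaselThreefolds} to see that $F^N=0$ for $N$ large. The only stylistic difference is that the paper deduces $F^N=0$ directly from the vanishing of $H^d(X,\mathbf{I}^{d+N}(\mathcal L))$ rather than passing through boundedness of the spectral sequence, but the underlying input is the same.
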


\begin{proof}
The filtration
\[
\ldots\subset \mathbf{I}^{n+d}(\mathcal L)\subset \mathbf{I}^{n+d-1}(\mathcal L)\subset\ldots\subset \mathbf{I}^{d+1}(\mathcal L)\subset \K_d^{\MW}(\mathcal L)
\]
to which the spectral sequence $E(\mathcal L,\MW)^{p,q}$ is associated yields a filtration $F^nH^d(X,\K_d^{\MW}(\mathcal L))$ for $n\geq 0$ of the cohomology group $H^d(X,\K_d^{\MW}(\mathcal L))$ with $F^0H^d(X,\K_d^{\MW}(\mathcal L))=H^d(X,\K_d^{\MW}(\mathcal L))$ and
\[
F^nH^d(X,\K_d^{\MW}(\mathcal L))=\mathrm{Im}(H^d(X,\mathbf{I}^{d+n}(\mathcal L)) \longrightarrow H^d(X,\K_d^{\MW}(\mathcal L)))
\]
for $n\geq 1$. Further, $F^nH^d(X,\K_d^{\MW}(\mathcal L))/F^{n+1}H^d(X,\K_d^{\MW}(\mathcal L)):=E(\mathcal L,\MW)^{d,d+n}_\infty$ and the cohomological vanishing statement of \cite[Proposition 5.1]{AsokFaselThreefolds} implies that only finitely many of the groups appearing above can be non-trivial. If we define the obstructions $\Psi^n(\alpha)$ to be the image of $\alpha$ in the successive quotients, the result is clear.
\end{proof}

The above result gives an inductively defined sequence of obstructions to decide whether an element of $H^d(X,\K_d^{\MW}(\mathcal L))$ is trivial.  Our next goal is to provide a ``concrete" description of the differentials appearing in the spectral sequence.  Lemmas \ref{lem:truncatedss} and \ref{lem:KMWss} imply that these differentials are essentially the differentials in the Pardon spectral sequence, and it is for that reason that we focus on the latter in the remaining sections.

\section{Some properties of the differentials}
In this section, we establish some properties of the differentials in the Pardon spectral sequence and thus the spectral sequence constructed in the previous section abutting to cohomology of twisted Milnor-Witt K-theory sheaves.  We first recall how these differentials are defined and then show that, essentially, they can be viewed as bi-stable operations in motivic cohomology.

\subsection{The operation $\Phi_{i,j}$}\label{ss:phiij}
Suppose $X$ is a smooth $k$-scheme and $\L$ is a line bundle on $X$.  Recall that for any $j\in \N$, the sheaf $\mathbf{I}^j(\L)$ comes equipped with a reduction map $\mathbf{I}^j(\L) \to \bar{\mathbf{I}}^j$ and that there is a canonical isomorphism $\K^{\Mi}_j/2 \to \bar{\mathbf{I}}^j$; we use this identification without mention in the sequel.  The exact sequence
\[
0 \longrightarrow \mathbf{I}^{j+1}(\L) \longrightarrow \mathbf{I}^j(\L) \longrightarrow \bar{\mathbf{I}}^j \longrightarrow 0
\]
yields a connecting homomorphism
\[
H^i(X,\bar{\mathbf{I}}^j) \stackrel{\partial_{\L}}{\longrightarrow} \H^{i+1}(X,\mathbf{I}^{j+1}(\L)).
\]
The reduction map gives a homomorphism
\[
H^{i+1}(X,\mathbf{I}^{j+1}(\L)) \longrightarrow H^{i+1}(X,\bar{\mathbf{I}}^{j+1}).
\]
Taking the composite of these two maps yields a homomorphism that is precisely the differential $d(\mathcal L)_2^{i,j}$. We state the following definition in order to avoid heavy notation.

\begin{defn}
\label{defn:phiijL}
If $X$ is a smooth scheme, and $\L$ is a line bundle on $X$, write
\[
\Phi_{i,j,\L}: H^i(X,\bar{\mathbf{I}}^j) \longrightarrow H^{i+1}(X,\bar{\mathbf{I}}^{j+1}).
\]
for the composite of the connecting homomorphism $\partial_{L}$ and the reduction map just described.  If $\L$ is trivial, suppress it from the notation and write $\Phi_{i,j}$ for the resulting homomorphism. Anticipating Theorem \ref{thm:main}, we sometimes refer to $\Phi_{i,j,\mathcal L}$ as an operation.
\end{defn}

When $i = j$, via the identification $\bar{\mathbf{I}}^j \cong \K^{\Mi}_j/2$, the map $\Phi_{i,i}$ can be viewed as a morphism $Ch^i(X) \to Ch^{i+1}(X)$, where $Ch^i(X) = CH^i(X)/2$.  As stated in Theorem \ref{thm:pardon}, Totaro identified this homomorphism as $Sq^2$.  More generally, we observe that the homomorphisms $\Phi_{i,j,\L}$ are functorial with respect to pull-backs by definition.

\subsection{Bi-stability of the operations $\Phi_{i,j}$}
\label{ss:bistability}
We now study bi-stability, i.e., stability with respect to $\pone$-suspension, of the operations $\Phi_{i,j}$.  If $X$ is a smooth scheme, we then need to compare an operation on $X$ and a corresponding operation on the space $X_+ \sma \pone$. The reader unfamiliar to this notation can take the following ad hoc definition. If $\mathbf{F}$ is a sheaf, then $H^i(X_+ \sma \pone,\mathbf{F})$ is defined to be the cokernel of the pull-back homomorphism
\[
H^i(X,\mathbf{F}) \longrightarrow H^i(X\times \pone,\mathbf{F}).
\]
In case $\mathbf{F}= \bar {\mathbf I}^j$, we use the projective bundle formula in $\bar{\mathbf{I}}^j$-cohomology (see, e.g., \cite[\S 4]{FaselIJ}) to identify this group in terms of cohomology on $X$. Indeed, we have an identification
\[
H^i(X\times \pone,\bar {\mathbf I}^j) \cong H^i(X,\bar {\mathbf I}^j)\oplus H^{i-1}(X,\bar {\mathbf I}^{j-1})\cdot \bar c_1(\mathcal O(-1)),
\]
where $\bar c_1(\mathcal O(-1))$ is the first Chern class of $\mathcal O(-1)$ in $H^1(X,\K_1^{\Mi}/2)=CH^1(X)/2$. Unwinding the definitions, this corresponds to an isomorphism of the form
\[
H^i(X_+ \sma \pone,\bar{\mathbf{I}}^j) \cong H^{i-1}(X,\bar {\mathbf I}^{j-1})
\]
that is functorial in $X$.  Using this isomorphism, we can compare the operation $\Phi_{i,j}$ on $H^i(X_+ \sma \pone,\bar{\mathbf{I}}^j)$ with the operation $\Phi_{i-1,j-1}$ on $H^{i-1}(X,\bar {\mathbf I}^{j-1})$.

\begin{prop}\label{prop:bistable}
There is a commutative diagram of the form
\[
\xymatrix{
H^{i}(X_+ \sma \pone,\bar{\mathbf{I}}^j) \ar[r]^-{\Phi_{i,j}}\ar[d] & H^{i+1}(X_+ \sma \pone,\bar{\mathbf{I}}^{j+1}) \ar[d] \\
H^{i-1}(X,\bar{\mathbf{I}}^{j-1}) \ar[r]_-{\Phi_{i-1,j-1}} & H^{i}(X,\bar{\mathbf{I}}^j),
}
\]
where the vertical maps are the isomorphisms described before the statement.  
\end{prop}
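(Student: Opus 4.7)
The plan is to unwind both sides of the diagram in terms of the projective bundle formula isomorphism and to reduce the commutativity to a Leibniz-type formula for the operation $\Phi$ applied to a product. Explicitly, the projective bundle formula identifies a class $\gamma \in H^{i-1}(X, \sI^{j-1})$ with the class (modulo $p^* H^i(X, \sI^j)$) of $p^*\gamma \cdot \bar c$ in $H^i(X \times \pone, \sI^j)$, where $p: X \times \pone \to X$ is the projection and $\bar c := \bar c_1(\mathcal O(-1)) \in H^1(X \times \pone, \sI^1)$. Since $\Phi_{i,j}$ is built out of the connecting homomorphism of a short exact sequence of sheaves and a reduction map, both of which are natural in $X$, it commutes with $p^*$. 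Consequently, the claimed diagram automatically commutes on the $p^* H^i(X, \sI^j)$ summand, and the content of the proposition is that
\[
\Phi_{i,j}(p^*\gamma \cdot \bar c) \equiv p^* \Phi_{i-1,j-1}(\gamma) \cdot \bar c \pmod{p^* H^{i+1}(X, \sI^{j+1})}.
\]

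To handle this, I would first establish a Leibniz rule for $\Phi$ applied to a product $\alpha \cdot \beta$ coming from the pairing $\sI^a \otimes \sI^b \to \sI^{a+b}$:
\[
\Phi(\alpha \cdot \beta) = \Phi(\alpha) \cdot \beta + (-1)^{|\alpha|} \alpha \cdot \Phi(\beta).
\]
This is proved at the cochain level. Lift $\alpha$ and $\beta$ to cocycles $\tilde\alpha, \tilde\beta$ in the Gersten-Witt complexes $C(X \times \pone, \mathbf{I}^{\ast})$ via the surjection $\mathbf{I}^{\ast} \twoheadrightarrow \sI^{\ast}$. Then $\tilde\alpha \cdot \tilde\beta$ is a lift of $\alpha \cdot \beta$, the differential of the Gersten-Witt complex respects the filtration by powers of the fundamental ideal, and it satisfies a graded Leibniz rule with respect to the multiplicative structure. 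Unwinding the definition of $\Phi$ as the composite of the connecting map of \eqref{equ:Ij} with the reduction $\mathbf{I}^{\ast+1} \to \sI^{\ast+1}$ then yields the displayed formula.

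Next, I would verify that $\Phi_{1,1}(\bar c) = 0$. There are two convenient arguments: either invoke the identification $\Phi_{1,1} = Sq^2$ on $H^1(-, \sI^1) = CH^1(-)/2$ from Theorem \ref{thm:pardon} and note that $Sq^2(\bar c) = \bar c^2$ vanishes in $CH^2(X \times \pone)/2$ because $c_1(\mathcal O(-1))^2 = 0$ on $X \times \pone$ by the standard projective bundle relation; or observe that $\bar c$ is the reduction modulo $\mathbf{I}^2$ of the image of $c_1(\mathcal O(-1))$ under $\K_1^{\MW} \to \mathbf{I}^1$, hence admits a lift to $H^1(X \times \pone, \mathbf{I}^1)$ and is therefore killed by the connecting homomorphism.

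Combining the Leibniz rule, the vanishing $\Phi_{1,1}(\bar c) = 0$, and the naturality $\Phi(p^*\gamma) = p^* \Phi(\gamma)$ gives
\[
\Phi_{i,j}(p^*\gamma \cdot \bar c) = p^*\Phi_{i-1,j-1}(\gamma) \cdot \bar c + (-1)^{i-1} p^*\gamma \cdot \Phi_{1,1}(\bar c) = p^*\Phi_{i-1,j-1}(\gamma) \cdot \bar c,
\]
which is precisely the image of $\Phi_{i-1,j-1}(\gamma)$ under the projective bundle formula, completing the proof. The main obstacle is the Leibniz rule: one has to check carefully that the product on the Gersten-Witt complex is compatible with the filtration by powers of $\mathbf{I}$, so that the cochain-level Leibniz identity descends to the formula for $\Phi$ stated above. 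The signs produced in this step are irrelevant to the final conclusion, since the only place where $\Phi$ is applied to a class other than $p^*\gamma$ or $p^*\Phi(\gamma)$ is $\Phi_{1,1}(\bar c)$, which vanishes.
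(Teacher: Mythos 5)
Your argument is essentially correct but follows a genuinely different route from the paper. The paper's proof is soft: it observes that $\Phi_{i,j}$ is built from the connecting map of $0\to\mathbf{I}^{j+1}\to\mathbf{I}^j\to\sI^j\to 0$ and the reduction $\mathbf{I}^{j+1}\to\sI^{j+1}$, and then invokes the computation of the contractions $(\mathbf{I}^j)_{-1}\cong\mathbf{I}^{j-1}$ and $(\sI^j)_{-1}\cong\sI^{j-1}$ from \cite[Lemma 2.7, Proposition 2.8]{AsokFaselSpheres}; since contraction takes the defining short exact sequence to the one in degree $j-1$, bistability is immediate and no product structure is needed. Your proof instead unwinds the projective bundle formula and reduces everything to a Leibniz rule for $\Phi$ plus the vanishing $\Phi_{1,1}(\bar c_1(\mathcal O(-1)))=0$. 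This is more explicit and self-contained in spirit, but it front-loads the hardest ingredient: the cochain-level Leibniz formula on the Gersten--Witt complex is not a formal manipulation. The product of cochains is only defined under proper-intersection/transversality hypotheses, and the paper itself, when it needs exactly such a formula in the proof of Theorem \ref{thm:twistplus}, has to pass through Balmer's symmetric-complex formalism and cite \cite[Theorem 5.2]{Balmer05a} together with \cite[Proposition 4.7]{Fasel07}, noting that the hypotheses of the quoted theorem are not literally satisfied. So your "main obstacle" is real, and discharging it costs roughly as much as the paper's entire alternative proof of Theorem \ref{thm:twistplus}.

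Two local corrections. First, you cannot lift $\alpha$ and $\beta$ to \emph{cocycles} in $C(\cdot,\mathbf{I}^{\ast})$ --- if such cocycle lifts existed, the connecting homomorphism and hence $\Phi$ would vanish on them; the lifts are only cochains whose boundaries land one step deeper in the filtration, and the Leibniz identity must be run at that level. Second, of your two arguments for $\Phi_{1,1}(\bar c)=0$, only the first is sound: the natural lift of $\bar c_1(\mathcal O(-1))$ produced by the zero section lives in the \emph{twisted} group $H^1(\cdot,\mathbf{I}(\mathcal O(-1)))$, not in $H^1(\cdot,\mathbf{I})$, so it does not show that the untwisted connecting homomorphism kills $\bar c$. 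The argument via Theorem \ref{thm:pardon} ($\Phi_{1,1}=Sq^2$ on $Ch^1$, and $Sq^2\bar c=\bar c^{\,2}=0$ on $X\times\pone$) is fine and not circular, since Totaro's identification of the diagonal differential is external input.
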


\begin{proof}
The operation $\Phi_{i,j}$ is induced by the composite morphism of the connecting homomorphism associated with the short exact sequence
\[
0 \longrightarrow \mathbf{I}^{j+1} \longrightarrow \mathbf{I}^{j} \longrightarrow \bar{\mathbf{I}}^{j} \longrightarrow 0
\]
and the reduction map $\mathbf{I}^{j+1} \to \bar{\mathbf{I}}^{j+1}$.  The contractions of $\mathbf{I}^j$ and $\bar{\mathbf{I}}^j$ are computed in \cite[Lemma 2.7 and Proposition 2.8]{AsokFaselSpheres} and our result follows immediately from the proofs of those statements.
\end{proof}

\begin{rem}
Because of the above result, we will abuse terminology and refer to $\Phi_{i,j}$ as a bi-stable operation.
\end{rem}

\subsection{Non-triviality of the operation $\Phi_{i-1,i,\L}$}
\label{ss:nontriviality}
Our goal in this section is to prove that the operation $\Phi_{i-1,i}$ is nontrivial.  By definition, the operation $\Phi_{i-1,i}$ can be computed as follows: given an element $\alpha\in H^{i-1}(X,\bar {\mathbf I}^i)$, we choose a lift to $C^{i-1}(X, \mathbf{I}^i)$, apply the boundary homomorphism to obtain an element $d_{i-1}(\alpha)\in C^{i}(X, \mathbf{I}^{i})$ which becomes trivial under the homomorphism $C^{i}(X, \mathbf{I}^{i})\to C^{i}(X, \bar {\mathbf{I}}^{i})$ (since $\alpha$ is a cycle). There exists thus a unique lift of $d_{i-1}(\alpha)\in C^{i}(X, \mathbf{I}^{i+1})$, which is a cycle since $d_id_{i-1}=0$. Its reduction in $H^i(X, \bar {\mathbf I}^{i})$ is $\Phi_{i-1,i}(\alpha)$ by definition.  We use the identification $H^{i-1}(X,\bar {\mathbf I}^i) \cong H^{i-1}(X,\K^{\Mi}_i/2)$ and the computations of Suslin in the case where $X = SL_3$ to provide explicit generators.  More precisely, \cite[Theorem 2.7]{Suslin91} shows that $H^1(SL_3,\K^{\Mi}_2/2)=\Z/2$, $H^2(SL_3,\K^{\Mi}_3/2)=\Z/2$.  We begin by finding explicit generators of the groups considered by Suslin and transfer those generators under the isomorphisms just described to obtain explicit representatives of classes in $H^1(SL_3,\bar {\mathbf I}^2)$ and $H^2(SL_3,\bar {\mathbf{I}}^3)$.  Then, we explicitly compute the connecting homomorphism and the reduction.  Our method and notation will follow closely \cite[\S 2]{Suslin91}.

For any $n\in \N$, let $Q_{2n-1}\subset \A^{2n}$ be the hypersurface given by the equation $\sum_{i=1}^n x_iy_i=1$. Let $SL_n=\mathrm{Spec}(k[(t_{ij})_{1\leq i,j\leq n}]/\langle \det (t_{ij})-1\rangle)$ and write $\alpha_n=(t_{ij})_{1\leq i,j\leq n}$ for the universal matrix on $SL_n$, and $(t^{ij})_{1\leq i,j\leq n}$ for its inverse $\alpha_n^{-1}$. For $n\geq 2$, we embed $SL_{n-1}$ into $SL_n$ as usual by mapping a matrix $M$ to $\mathrm{diag}(1,M)$, and we observe that the quotient is precisely $Q_{2n-1}$ by means of the homomorphism $f:SL_n\to Q_{2n-1}$ given by $f^*(x_i)=t_{1i}$ and $f^*(y_i)=t^{i1}$. Now $Q_{2n-1}$ is covered by the affine open subschemes $U_i:=D(x_i)$ and the projection $f:SL_n\to Q_{2n-1}$ splits over each $U_i$ by means of a matrix $\gamma_i\in E_n(U_i)$ given for instance in \cite[\S 2]{Suslin91}. The only properties that we will use here are that these sections induce isomorphisms $f^{-1}(U_i)\simeq U_i\times SL_{n-1}$ mapping $(\alpha_n)_{\vert f^{-1}(U_i)}\gamma_i^{-1}$ to $\mathrm{diag}(1,\alpha_{n-1})$. Recall next from \cite[\S 2]{Gillet81}, that one can define Chern classes
\[
c_i:K_1(X) \longrightarrow H^{i}(X,\K_{i+1}^{\Mi}/2)
\]
functorially in $X$. In particular, we have Chern classes $c_i:K_1(SL_n)\to H^i(SL_n,\K_{i+1}^{\Mi}/2)$ and we set $d_{i,n}:=c_i(\alpha_n)$.

The stage being set, we now proceed to our computations. We will implicitly use the Gersten resolution of the sheaves $\K^{\Mi}_i/2$ in our computations below. Observe first that the equations $x_2=\ldots=x_n=0$ define an integral subscheme $Z_n\subset Q_{2n-1}$, and that the global section $x_1$ is invertible on $Z_n$. It follows that it defines an element in $(\K_1^{\Mi}/2)(k(Z_n))$ and a cycle $\theta_n\in H^{n-1}(Q_{2n-1},\K^{\Mi}_n)$.

\begin{lem}
For any smooth scheme $X$, the $H^*(X,\K_*^{\Mi}/2)$-module $H^*(Q_{2n-1}\times X,\K_*^{\Mi}/2)$ is free with basis $1,\theta_n$.
\end{lem}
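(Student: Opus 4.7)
The plan is to reduce, via an $\aone$-weak equivalence, to a computation on $(\A^n\setminus 0)\times X$, and then exploit the localization sequence for the origin inside $\A^n$ together with $\aone$-invariance. First consider the projection $\pi : Q_{2n-1}\to \A^n\setminus 0$, $(x,y)\mapsto x$. Its fiber over $x$ is the affine hyperplane $\{y:\sum_i x_iy_i=1\}\subset \A^n$, which is a torsor under the kernel of the surjection $\mathcal O_{\A^n\setminus 0}^n\twoheadrightarrow \mathcal O_{\A^n\setminus 0}$ determined by $x$. Thus $\pi$ is a torsor under a rank-$(n-1)$ vector bundle and in particular a Zariski-locally trivial affine bundle; after base change by $X$, strict $\aone$-invariance of $\K^{\Mi}_*/2$ yields an isomorphism of $H^*(X,\K^{\Mi}_*/2)$-modules $\pi^*:H^*((\A^n\setminus 0)\times X,\K^{\Mi}_*/2)\isomto H^*(Q_{2n-1}\times X,\K^{\Mi}_*/2)$.

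To compute the left-hand side, I apply the localization long exact sequence for $X\times\{0\}\hookrightarrow X\times\A^n$ with open complement $X\times(\A^n\setminus 0)$. Orientation of $\K^{\Mi}_*/2$ gives a Thom isomorphism identifying the local cohomology with $H^{*-n}(X,\K^{\Mi}_{*-n}/2)$, and $\aone$-invariance identifies $H^*(X\times\A^n,\K^{\Mi}_*/2)$ with $H^*(X,\K^{\Mi}_*/2)$. The composite $i^*i_*$ for the zero-section inclusion $i:X\hookrightarrow X\times\A^n$ is cup product with the top Chern class $c_n(\mathcal O^n)=0$ of the trivial normal bundle, so the long exact sequence collapses into short exact sequences
\[
0\to H^i(X,\K^{\Mi}_j/2)\to H^i(X\times(\A^n\setminus 0),\K^{\Mi}_j/2)\to H^{i-n+1}(X,\K^{\Mi}_{j-n}/2)\to 0.
\]
These are split by multiplication with a class $\xi_n\in H^{n-1}(\A^n\setminus 0,\K^{\Mi}_n/2)$ which, in terms of the Gersten resolution, is represented by the codimension-$(n-1)$ cycle $\{x_2=\cdots=x_n=0\}\cap(\A^n\setminus 0)\cong \mathbb{G}_m$ decorated by the symbol $\{x_1\}\in \K^{\Mi}_1/2$. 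Thus $H^*(X\times(\A^n\setminus 0),\K^{\Mi}_*/2)$ is a free $H^*(X,\K^{\Mi}_*/2)$-module on the basis $\{1,\xi_n\}$.

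Finally, $\pi^{-1}(\{x_2=\cdots=x_n=0\})$ is exactly the subscheme $Z_n\subset Q_{2n-1}$ and the coordinate $x_1$ pulls back to $x_1$, so $\pi^*\xi_n=\theta_n$ in the Gersten complex, and the free basis transfers across the isomorphism of the first paragraph. The main obstacle I anticipate is justifying the vanishing of the Gysin composite, which rests on identifying $i^*i_*$ with cup product by the top Chern class of the normal bundle in the oriented theory $\K^{\Mi}_*/2$; once this is established, the explicit matching of $\xi_n$ with $\theta_n$ via Gersten complexes is routine.
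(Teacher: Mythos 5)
Your proof is correct and is essentially the paper's own argument: the paper's proof consists of the single line ``apply the proof of \cite[Theorem 1.5]{Suslin91} mutatis mutandis,'' and Suslin's proof is precisely the reduction along the affine-bundle torsor $Q_{2n-1}\to\A^n\setminus 0$ followed by the localization sequence for the origin in $\A^n$, with the class $\theta_n$ splitting the boundary map. You have simply written out the details that the paper delegates to the reference (your detour through $i^*i_*=c_n(\mathcal O^n)=0$ is harmless but unnecessary, since $\partial(\xi_n)=1$ already forces the sequence to split).
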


\begin{proof}
Apply the proof of \cite[Theorem 1.5]{Suslin91} mutatis mutandis.
\end{proof}

Since $Q_3=SL_2$, we can immediately deduce a basis for the cohomology of $SL_2$. However, we can reinterpret $\theta_2$ as follows.

\begin{lem}\label{lem:firstChern}
If $X$ is a smooth scheme, then $H^*(SL_2\times X,\K^{\Mi}_*/2)$ is a free $H^*(X,\K^{\Mi}_*/2)$-module generated by $1\in H^0(X,\K^{\Mi}_0/2)$ and $d_{1,2}\in H^1(SL_2,\K_2^{\Mi}/2)$.
\end{lem}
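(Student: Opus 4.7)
The plan is to deduce the statement directly from the previous lemma, applied with $n=2$, via the identification $SL_2 = Q_3$.  That lemma furnishes a free $H^*(X,\K_*^{\Mi}/2)$-basis $\{1,\theta_2\}$ for $H^*(SL_2 \times X,\K_*^{\Mi}/2)$, so the task reduces to showing that $d_{1,2}$ can be substituted for $\theta_2$ in this basis.  Since the basis of the previous lemma is natural in $X$, the change-of-basis statement descends to a statement over $X = \Spec k$.

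Over $\Spec k$, write $d_{1,2} = r \cdot 1 + s \cdot \theta_2$ with $r \in H^1(\Spec k,\K_2^{\Mi}/2)$ and $s \in H^0(\Spec k,\K_0^{\Mi}/2) = \Z/2$.  The change of basis from $\{1,\theta_2\}$ to $\{1,d_{1,2}\}$ is upper-triangular with diagonal entries $1$ and $s$; it is an isomorphism of graded $H^*(\Spec k,\K_*^{\Mi}/2)$-modules precisely when $s = 1$, i.e., when $d_{1,2}$ has nonzero image in the quotient $H^1(SL_2,\K_2^{\Mi}/2)/H^1(\Spec k,\K_2^{\Mi}/2) \cong \Z/2$.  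Naturality in $X$ then propagates the module basis from $\Spec k$ to $X$, so the whole lemma reduces to this single non-vanishing statement.

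To establish the non-vanishing, I would compute the Gersten-complex representative of $d_{1,2} = c_1(\alpha_2)$ directly.  On the open subscheme $D(t_{11}) \subset SL_2$, elementary operations reduce $\alpha_2$ to the diagonal matrix $\operatorname{diag}(t_{11},t_{11}^{-1})$ in $K_1$ modulo $E_2$; the explicit formulas for Gillet's Chern classes of \cite[\S 2]{Gillet81} on diagonalizable $K_1$-elements, combined with the Gersten boundary along the divisor where $t_{11}$ acquires a zero, identify the cycle representative of $c_1(\alpha_2)$ at the generic point of $Z_2 = \{t_{12} = 0\}$ with the symbol $\{t_{11}\}$ representing $\theta_2$, up to a nonzero scalar mod $2$.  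A shorter alternative is to invoke Suslin's \cite[Theorem 2.7]{Suslin91}, which identifies the generator of $H^1(SL_2,\K_2^{\Mi}/2) \cong \Z/2$ with $c_1(\alpha_2) = d_{1,2}$.

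The main obstacle is exactly this explicit identification of the Gersten cocycle for $c_1(\alpha_2)$; all remaining steps are bookkeeping with the free basis inherited from the previous lemma.  Once the non-vanishing is secured on $SL_2$ itself, naturality of the basis in $X$ and invertibility of the upper-triangular change-of-basis matrix immediately yield the claimed module structure on $H^*(SL_2 \times X,\K_*^{\Mi}/2)$.
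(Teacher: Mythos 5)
Your proposal is correct and follows essentially the same route as the paper: the paper likewise deduces the statement from the preceding lemma via $Q_3 = SL_2$ and then reinterprets $\theta_2$ as $d_{1,2}$ by deferring to Suslin (it cites the proof of Proposition 1.6 of Suslin's paper for precisely the identification to which you reduce everything). Your explicit reduction to the single non-vanishing statement $s=1$ over $\Spec k$, using naturality of the basis $\{1,\theta_2\}$, is a correct fleshing-out of what the paper leaves implicit.
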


\begin{proof}
Again, this is essentially \cite[proof of Proposition 1.6]{Suslin91}.
\end{proof}

Before stating the next lemma, recall that we have a projection morphism $f:SL_3\to Q_5$, yielding a structure of $H^*(Q_5,\K^{\Mi}_*/2)$-module on the cohomology of $SL_3$.

\begin{lem}\label{lem:Chern}
The $H^*(Q_5,\K^{\Mi}_*/2)$-module $H^*(SL_3,\K^{\Mi}_*/2)$ is free with basis $1$ and $d_{1,3}$.
\end{lem}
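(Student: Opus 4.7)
The plan is to imitate the proof of Lemma~\ref{lem:firstChern} (itself modelled on Suslin's argument), reducing the global computation for $f: SL_3 \to Q_5$ to a local one via the affine cover $\{U_i\}_{i=1,2,3}$ of $Q_5$ together with the trivializations $f^{-1}(U_i)\simeq U_i\times SL_2$ already recalled above.

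The first step is to identify, in every local chart, the restriction of $d_{1,3}$ with the class $d_{1,2}$ appearing in Lemma~\ref{lem:firstChern}. Under $f^{-1}(U_i)\simeq U_i\times SL_2$, the universal matrix $(\alpha_3)|_{f^{-1}(U_i)}$ corresponds, up to the elementary factor $\gamma_i \in E_3(U_i)$, to $\mathrm{diag}(1,\alpha_2)$. Since $c_1: K_1(-)\to H^1(-,\K^{\Mi}_2/2)$ is a group homomorphism on $K_1$ and vanishes on elementary matrices, we have $c_1\bigl((\alpha_3)|_{f^{-1}(U_i)}\bigr) = c_1(\mathrm{diag}(1,\alpha_2))$; by functoriality and the stability of Chern classes under stabilization by $1$, the latter is $p_2^* d_{1,2}$ where $p_2: U_i\times SL_2\to SL_2$ is the projection. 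Hence $d_{1,3}$ restricts to $d_{1,2}$ in each chart, and similarly on every intersection.

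The second step invokes Lemma~\ref{lem:firstChern} with $X = U_{i_1}\cap\cdots\cap U_{i_r}$ for every nonempty intersection $V$ of the $U_i$'s, yielding that $H^*(f^{-1}(V),\K^{\Mi}_*/2)$ is a free module of rank two over $H^*(V,\K^{\Mi}_*/2)$ on the basis $\{1, d_{1,2}\}$, which by Step~1 is the restriction of the globally defined pair $\{1, d_{1,3}\}$. The third and final step is to compare the \v Cech/Mayer--Vietoris spectral sequences attached to $Q_5 = \bigcup_i U_i$ and $SL_3 = \bigcup_i f^{-1}(U_i)$. By Step~2, each row of the $E_1$-page of the latter decomposes as a direct sum of two copies of the corresponding row of the former — one spanned by the class $1$, the other by the class $d_{1,3}$ — and since both spanning classes are defined globally and the isomorphisms of Step~2 are natural in $V$, this decomposition is preserved by every differential. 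Passing to the abutment yields the claimed free module structure.

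The main obstacle is precisely this spectral sequence comparison: one has to verify that the local bases assemble into a coherent global basis, i.e., that the Mayer--Vietoris differentials respect the $\{1, d_{1,3}\}$-decomposition on every page. This compatibility rests on the globality of $d_{1,3}$ together with the naturality statement of Lemma~\ref{lem:firstChern}, and is exactly what is hidden behind the phrase \emph{mutatis mutandis} borrowed from \cite[proof of Theorem 1.5]{Suslin91}; once this bookkeeping is set up, the conclusion is formal.
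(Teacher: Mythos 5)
Your proposal is correct and follows essentially the same route as the paper: reduce to the charts $f^{-1}(U_i)\simeq U_i\times SL_2$ via Mayer--Vietoris in the style of Suslin, use functoriality of $c_1$ and the fact that $\gamma_i\in E_3(U_i)$ to identify the restriction of $d_{1,3}$ with $p^*d_{1,2}$, and conclude by Lemma~\ref{lem:firstChern}. The paper merely compresses the patching step into the phrase ``Mayer--Vietoris sequences in the spirit of \cite[Lemma 2.2]{Suslin91}'', which is the bookkeeping you spell out in your third step.
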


\begin{proof}
Using Mayer-Vietoris sequences in the spirit of \cite[Lemma 2.2]{Suslin91}, we see that it suffices to check locally that $1$ and $d_{1,3}$ is a basis. Let $U_i\subset Q_{2n-1}$ be the open subschemes defined above. We know that we have an isomorphism $f^{-1}(U_i)\simeq U_i\times SL_2$ mapping $(\alpha_3)_{\vert f^{-1}(U_i)}\gamma_i^{-1}$ to $\mathrm{diag}(1,\alpha_2)$. The Chern class $c_1$ being functorial, we have a commutative diagram
\[
\xymatrix{K_1(SL_3)\ar[r]^-{c_1}\ar[d]_-{i^*} & H^1(SL_3,\K^{\Mi}_2/2)\ar[d]^-{i^*} \\
K_1(f^{-1}(U_i))\ar[r]_-{c_1} & H^1(f^{-1}(U_i),\K^{\Mi}_2/2)
}
\]
where the vertical homomorphisms are restrictions. We thus see that $i^*(d_{1,3})=i^*(c_1(\alpha_3))=c_1(i^*(\alpha_3))$. Since $\gamma_i\in E_3(U_i)$, we see that $c_1(i^*(\alpha_3))=c_1(p^*\alpha_2)=p^*d_{1,2}$ where $p:f^{-1}(U_i)\to SL_2$ is the projection. The result now follows from Lemma \ref{lem:firstChern}.
\end{proof}

Combining Lemmas \ref{lem:firstChern} and \ref{lem:Chern}, we immediately obtain the following result.

\begin{cor}\label{cor:explicit}
We have $H^1(SL_3,\K_2^{\Mi}/2)=\Z/2\cdot d_{1,3}$ and $H^2(SL_3,\K^{\Mi}_3/2)=\Z/2\cdot f^*(\theta_3)$.
\end{cor}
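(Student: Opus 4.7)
The plan is to combine the two freeness results just proved to get a presentation of $H^*(SL_3,\K^{\Mi}_*/2)$ as a free module over the base $K^{\Mi}_*(k)/2 = H^*(\Spec k,\K^{\Mi}_*/2)$, and then read off the two specific bidegrees $(1,2)$ and $(2,3)$.

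First I would apply the unnamed lemma (the one about $H^*(Q_{2n-1}\times X,\K^{\Mi}_*/2)$) with $n=3$ and $X=\Spec k$ to conclude that $H^*(Q_5,\K^{\Mi}_*/2)$ is a free $K^{\Mi}_*(k)/2$-module with basis $\{1,\theta_3\}$. Then by Lemma \ref{lem:Chern}, $H^*(SL_3,\K^{\Mi}_*/2)$ is a free $H^*(Q_5,\K^{\Mi}_*/2)$-module with basis $\{1,d_{1,3}\}$ via the pull-back $f^*$. Composing these two free module structures and using associativity of cup products, one obtains that $H^*(SL_3,\K^{\Mi}_*/2)$ is a free $K^{\Mi}_*(k)/2$-module on the four generators
\[
1,\quad d_{1,3},\quad f^*(\theta_3),\quad d_{1,3}\cdot f^*(\theta_3).
\]

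Next I would record the bidegrees of these generators. By construction $1\in H^0(SL_3,\K^{\Mi}_0/2)$, while $d_{1,3}=c_1(\alpha_3)\in H^1(SL_3,\K^{\Mi}_2/2)$ by the definition of the Chern class $c_1\colon K_1\to H^1(-,\K^{\Mi}_2/2)$, and $f^*(\theta_3)\in H^2(SL_3,\K^{\Mi}_3/2)$ since $\theta_3\in H^2(Q_5,\K^{\Mi}_3)$. The last generator then sits in bidegree $(3,5)$. Now to compute $H^p(SL_3,\K^{\Mi}_q/2)$ one sums the contributions of each basis element $b$ of cohomological degree exactly $p$, each contributing a copy of $K^{\Mi}_{q-q(b)}(k)/2$ where $q(b)$ is the $\K^{\Mi}$-index of $b$.

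For $(p,q)=(1,2)$ the only basis element of cohomological degree $1$ is $d_{1,3}$, contributing $K^{\Mi}_0(k)/2\cdot d_{1,3}=\Z/2\cdot d_{1,3}$. For $(p,q)=(2,3)$ the only basis element of cohomological degree $2$ is $f^*(\theta_3)$, contributing $K^{\Mi}_0(k)/2\cdot f^*(\theta_3)=\Z/2\cdot f^*(\theta_3)$. This is exactly the statement of the corollary. The main content of the argument has already been done in the two preceding lemmas, so the only obstacle here is purely bookkeeping: verifying that the composition of the two module structures really does yield the claimed four-element basis, which follows formally from functoriality of $f^*$ and the projection-formula behaviour of cup products on cohomology with coefficients in the graded sheaf $\K^{\Mi}_*/2$.
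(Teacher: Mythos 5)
Your proposal is correct and is essentially the paper's argument: the corollary is stated there as an immediate consequence of the freeness results (the lemma on $H^*(Q_{2n-1}\times X,\K^{\Mi}_*/2)$ applied to $Q_5$ together with Lemma \ref{lem:Chern}), and your bookkeeping with the four-element basis $1$, $d_{1,3}$, $f^*(\theta_3)$, $d_{1,3}\cdot f^*(\theta_3)$ over $K^{\Mi}_*(k)/2$ and the bidegree count is exactly what is implicit in the paper's one-line proof.
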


The cycle $f^*(\theta_3)$ is very explicit. Indeed, it can be represented by the class of the global section $t_{11}$ in $(\K_1^{\Mi}/2)(k(z_1))$ where $z_1$ is given by the equations $t_{12}=t_{13}=0$. We now make $d_{1,3}$ more explicit. Recall that $\alpha_3=(t_{ij})$ is the universal matrix on $SL_3$ and $\alpha_3^{-1}=(t^{ij})$ is its inverse. In particular, we have $\sum_{j=1}^3 t_{ij} t^{jk}=\delta_{jk}=\sum_{j=1}^3 t^{ij}t_{jk}$.

\begin{lem}\label{lem:H1K2}
If $y_1\in SL_3^{(1)}$ is defined by the ideal $\langle t^{13}\rangle$ and $y_2\in SL_3^{(1)}$ is defined by the ideal $\langle t_{12}\rangle$, then a generator for the group $H^1(SL_3,\K^{\Mi}_2/2) \cong \Z/2$, is given by the class of the symbol
\[
\xi:=\{t^{12}\} + \{ t_{13}\}
\]
in $\K_1^{\Mi}(k(y_1))/2\oplus \K_1^{\Mi}(k(y_2))/2$.
\end{lem}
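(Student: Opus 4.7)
The plan is to leverage Corollary \ref{cor:explicit}, which tells us that $H^1(SL_3, \K_2^{\Mi}/2) \cong \Z/2$; given this, it suffices to verify that $\xi$ defines a nonzero cycle in the Gersten complex for $\K_2^{\Mi}/2$ on $SL_3$.

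First, I would check that $\xi$ is a cycle. The Gersten differential from codimension one to codimension two sends a symbol $\{a\} \in \K_1^{\Mi}(k(y))/2$ to $\sum_z v_z(a) \cdot [z]$, the sum of valuations at the codimension two points $z \in \overline{y}$ reduced modulo two. Since $t^{12}$ and $t_{13}$ are regular on $SL_3$, the only codimension two points that can contribute for the two summands of $\xi$ lie in $V(t^{12}) \cap V(t^{13})$ and $V(t_{12}) \cap V(t_{13})$ respectively. Using the identities $\sum_j t^{1j} t_{jk} = \delta_{1k}$, each of these loci forces $t_{11}$ and $t^{11}$ to be units and implies the vanishing of the other pair, so the two vanishing sets coincide and cut out a single irreducible codimension two subscheme $W \subset SL_3$ (with generic point $z$). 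At $z$, both $(t^{12}, t^{13})$ and $(t_{12}, t_{13})$ equal the maximal ideal of $\mathcal{O}_{SL_3, z}$, so the DVRs $\mathcal{O}_{\overline{y_1}, z}$ and $\mathcal{O}_{\overline{y_2}, z}$ admit $t^{12}$ and $t_{13}$ as uniformizers. Both valuations are equal to one and they cancel modulo two.

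Second, I would show that $\xi$ is nonzero. Since the unique nonzero element of $H^1(SL_3, \K_2^{\Mi}/2)$ is $d_{1,3} = c_1(\alpha_3)$, it suffices to identify $\xi|_{f^{-1}(U_1)}$ with $d_{1,3}|_{f^{-1}(U_1)}$. Under the isomorphism $f^{-1}(U_1) \cong U_1 \times SL_2$ that sends $\alpha_3 \gamma_1^{-1}$ to $\mathrm{diag}(1,\alpha_2)$, the computation in the proof of Lemma \ref{lem:Chern} yields $d_{1,3}|_{f^{-1}(U_1)} = p^* d_{1,2}$, where $p\colon f^{-1}(U_1) \to SL_2$ is the projection. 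I would then trace the functions $t^{12}$ and $t_{13}$ through this splitting and confirm that the image of $\xi$ in $H^1(f^{-1}(U_1), \K_2^{\Mi}/2)$ coincides with $p^* d_{1,2}$, whose nontriviality is provided by Lemma \ref{lem:firstChern}.

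The hard part is the last step: explicitly tracking $t^{12}$ and $t_{13}$ through the splitting by $\gamma_1$ to verify that the image of $\xi$ is indeed $p^* d_{1,2}$ (rather than zero). An alternative is to compute $c_1(\alpha_3)$ directly in the Gersten complex by descending from a suitable generic decomposition of $\alpha_3$, but this amounts to essentially the same local coordinate calculation.
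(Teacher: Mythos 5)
Your first step (the cycle check) is complete and is exactly the paper's argument: both boundary terms are supported at the generic point of $V(t_{12},t_{13})=V(t^{12},t^{13})$ (the equality of these loci follows from $\sum_j t^{1j}t_{jk}=\delta_{1k}$, which makes $t^{11}$ and $t_{11}$ units there and forces each pair of functions into the ideal generated by the other), each with valuation $1$, so they cancel modulo $2$.

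The gap is in the second step, which is where the actual content of the lemma lies: ``I would then trace the functions $t^{12}$ and $t_{13}$ through this splitting and confirm\dots'' is precisely the verification that distinguishes $\xi$ being a generator from $\xi$ being zero, and nothing in your write-up rules out the latter. You have correctly reduced the problem (restriction to a chart where the class can be compared with $p^*d_{1,2}$ suffices, since $H^1(SL_3,\K_2^{\Mi}/2)=\Z/2\cdot d_{1,3}$ and $d_{1,3}$ restricts to $p^*d_{1,2}\neq 0$), but you have not performed the comparison. Moreover, your choice of chart makes the deferred computation harder than necessary: on $f^{-1}(U_1)=SL_3[t_{11}^{-1}]$ both divisors $V(t^{13})$ and $V(t_{12})$ survive, so both summands of $\xi$ contribute and both must be pushed through $\gamma_1$. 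The paper instead pulls back along an explicit embedding $g\colon SL_2\to SL_3$ whose image lies in $SL_3[t_{12}^{-1}]=f^{-1}(U_2)$ (it sends $(u_{ij})$ to a matrix with $t_{12}=-1$); on that open set the summand $\{t_{13}\}$, supported on $V(t_{12})$, restricts to zero, and reading $t^{12},t^{13}$ off the explicit inverse matrix gives $g^*\xi=\{u_{22}\}$ supported on $V(u_{12})$, which is identified with $d_{1,2}$ using $u_{11}u_{22}\equiv 1$ on $V(u_{12})$. If you keep your route through $U_1$, you must actually carry out the coordinate calculation; as written, the nontriviality of $\xi$ is asserted rather than proved.
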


\begin{proof}
The image of $\{ t_{13}\}$ under the boundary map in the Gersten complex is the generator of $\K_0^M(k(z_1))/2$ where $z_1$ is the point defined by the ideal $I_1:=\langle t_{12}, t_{13}\rangle$, while the image of $\{ t^{12}\}$ is the generator of $\K_0^M(k(z_2))/2$ where $z_2$ is the point defined by the ideal $I_2:=\langle t^{12},t^{13}\rangle$. It suffices then to check that $z_1=z_2$ to conclude that $\xi$ is a cycle.

The equality $\sum_{j=1}^3 t^{1j}t_{j1}=1$ shows that $t^{11}$ is invertible modulo $I_2$ and we deduce from $\sum_{j=1}^3 t^{1j}t_{j2}=0$ that $t_{12}\in I_2$. Similarly, we deduce from $\sum_{j=1}^3 t^{1j}t_{j3}=0$ that $t_{13}\in I_2$ and therefore $I_1\subset I_2$. Reasoning symmetrically we obtain that $I_2\subset I_1$, proving the claim.

Since $\xi$ is a cycle, it defines a class in $H^1(SL_3,\K^{\Mi}_2/2) \cong \Z/2$ and it suffices thus to show that the class of $\xi$ is non trivial to conclude.
Consider the embedding (of schemes, but not of group schemes) $g:SL_2\to SL_3$ given by
\[
\begin{pmatrix} u_{11} & u_{12} \\
u_{21} & u_{22}\end{pmatrix} \mapsto \begin{pmatrix} 0 & -1 & 0 \\ u_{11} & 0 & u_{12} \\ u_{21} & 0 & u_{22}\end{pmatrix}.
\]
Since this morphism factors through the open subscheme $SL_3[t_{12}^{-1}]=f^{-1}(U_2)$ and the inverse of the above matrix is given by the matrix
\[
\begin{pmatrix} 0 & u_{22} & -u_{12} \\ -1 & 0 & 0 \\ 0 & -u_{21} & u_{11}\end{pmatrix},
\]
it follows that $g^*(\xi)$ is represented by the class of $\{ u_{22}\}$ in $\K_1^{\Mi}(k(s))/2$, where $s$ is given by $u_{12}=0$.  One can then verify directly that this cycle equals the generator $d_{1,2}$ given in Lemma \ref{lem:firstChern}, and it follows that $\xi\neq 0$.
\end{proof}

\begin{prop}
\label{prop:nontriviality}
The operation $\Phi_{i-1,i}$ is non-trivial.
\end{prop}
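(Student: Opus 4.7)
The plan is to prove non-triviality of the bi-stable operation $\Phi_{i-1,i}$ by an explicit computation in the case $i=2$ on $X = SL_3$. By Corollary \ref{cor:explicit}, $H^1(SL_3, \bar{\mathbf{I}}^2) \cong \Z/2 \cdot \xi$ and $H^2(SL_3, \bar{\mathbf{I}}^3) \cong \Z/2 \cdot f^*(\theta_3)$, so the task reduces to showing $\Phi_{1,2}(\xi) = f^*(\theta_3)$.

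I will apply the recipe for $\Phi_{i-1,i}$ recalled at the start of this subsection. First, lift the cycle $\xi = \{t^{12}\} + \{t_{13}\}$ to a cochain $\tilde\xi \in C^1(SL_3, \mathbf{I}^2)$ by sending each Milnor symbol $\{u\}$ to the corresponding $1$-fold Pfister form $\langle 1, -u\rangle$, together with fixed choices of generators for the rank-one free modules $W_{fl}(k(y_1))$ and $W_{fl}(k(y_2))$. Second, compute $d_1(\tilde\xi) \in C^2(SL_3, \mathbf{I}^2)$ using the standard second-residue formulas for Pfister forms at discrete valuations. By Lemma \ref{lem:H1K2}, the residues at codimension $2$ points other than the common point $z_1 = z_2 = V(t_{12}, t_{13})$ cancel, and because $\xi$ is a cycle in $\bar{\mathbf{I}}^2$, the residue at $z_1$ has trivial image in $\bar{\mathbf{I}}^2$ (i.e.\ even Witt rank), hence lifts uniquely to an element $\lambda \in C^2(SL_3, \mathbf{I}^3)$ supported at $z_1$.

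Third, I compute the reduction of $\lambda$ in $C^2(SL_3, \bar{\mathbf{I}}^3) \cong C^2(SL_3, \K^{\Mi}_3/2)$. The key input is the pair of cofactor identities $t^{12} = -(t_{12} t_{33} - t_{13} t_{32})$ and $t^{13} = t_{12} t_{23} - t_{13} t_{22}$ used in the proof of Lemma \ref{lem:H1K2}: these allow one to express $t^{12}$ on $y_1$ (and $t_{13}$ on $y_2$) as a uniformizer at $z_1$ times a unit whose value at $z_1$ is governed by the other entries of $\alpha_3$. Combining this with the fact that $t_{11}$ is a unit at $z_1$ (since $\det \alpha_3 = 1$ and $t_{12} = t_{13} = 0$ there), a direct application of the residue formulas identifies the image of $\lambda$ at $z_1$ in $\bar{I}^1(k(z_1)) \cong k(z_1)^\times / k(z_1)^{\times 2}$ with the class of $t_{11}$, which is exactly $f^*(\theta_3)$ by the description following Corollary \ref{cor:explicit}.

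The main obstacle is the bookkeeping demanded by the Gersten-Witt complex: one must fix consistent local uniformizers and generators of $W_{fl}(k(y_i))$ and $W_{fl}(k(z_1))$, propagate these choices through the second-residue computation, and identify the lift from $\mathbf{I}^2$ to $\mathbf{I}^3$ modulo $\mathbf{I}^4$. Once the bases are fixed, the computation is elementary, but tracking the twisting factors and sign conventions correctly in order to land the final answer in the correct copy of $\bar{I}^1(k(z_1))$ requires care.
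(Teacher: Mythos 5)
Your proposal follows essentially the same route as the paper's proof: reduce to the case $i=2$ on $SL_3$ via bi-stability, lift the explicit generator $\xi=\{t^{12}\}+\{t_{13}\}$ of Lemma \ref{lem:H1K2} into the Gersten--Witt complex for $\mathbf{I}^2$, compute its boundary using the cofactor identities and the invertibility of $t^{11}$ and $t_{11}$ at $z_1$, and identify the resulting class in $\bar{I}^1(k(z_1))$ with $\{t_{11}\}$, i.e.\ with the generator $f^*(\theta_3)$ of Corollary \ref{cor:explicit}. The only point to make explicit in the write-up is that the rank-two piece $\langle 1,1\rangle$ appearing in the boundary is itself a coboundary (the paper exhibits it as the boundary of $(\langle 1,t_{13}\rangle\otimes\langle 1,t_{13}\rangle)\cdot\rho_2$), so that the cochain-level answer is well defined in cohomology; this is exactly the sign/bookkeeping issue you flag at the end.
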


\begin{proof}
We compute the effect of the operation $\Phi_{1,2}$ on elements of $H^1(SL_3,\K^{\Mi}_2/2)$. By definition, $\Phi_{1,2}$ is the composite
\[
H^1(SL_3,\K^{\Mi}_2/2)=H^1(SL_3,\bar {\mathbf I}^2)\longrightarrow H^2(SL_3,{\mathbf I}^3)\longrightarrow H^2(SL_3,\bar {\mathbf I}^3)=H^2(SL_3,\K^{\Mi}_3/2)
\]
where the left-hand map is the boundary homomorphism associated with the exact sequence of sheaves
\[
0 \longrightarrow {\mathbf I}^3 \longrightarrow {\mathbf I}^2 \longrightarrow \bar{\mathbf I}^2 \longrightarrow 0
\]
and the right-hand map is the projection associated with the morphism of sheaves ${\mathbf I}^3\to \bar {\mathbf I}^3$.  We will show that $\Phi_{1,2}$ is an isomorphism by showing that the explicit generator of $H^1(SL_3,\K^{\Mi}_2/2)$ constructed in Lemma \ref{lem:H1K2} is mapped to the explicit generator of $H^2(SL_3,\K^{\Mi}_3/2)$ constructed in Corollary \ref{cor:explicit}.

Recall from Section \ref{sec:sheaves} the Gersten resolution $C(X,{\mathbf I}^j)$ of the sheaf ${\mathbf I}^j$, which takes the form
\[
\xymatrix@C=1.2em{I^j(k(X))\ar[r] & \displaystyle{\bigoplus_{x\in X^{(1)}} I^{j-1}_{fl}(k(x))}\ar[r]^-{d_1} &\displaystyle{\bigoplus_{x\in X^{(2)}} I^{j-2}_{fl}(k(x)) }\ar[r] & \displaystyle{\bigoplus_{x\in X^{(3)}} I^{j-3}_{fl}(k(x)) }\ar[r] & \ldots
}
\]
where $X$ is a smooth scheme, and $I^{j-1}_{fl}(k(x))=I^{j-1}(k(x))\cdot W_{fl}(\mathcal O_{X,x})$.  Take $X = SL_3$.

An explicit lift of the generator of $H^1(SL_3,\K^{\Mi}_2/2)$ given in Lemma \ref{lem:H1K2} is of the form
\[
\langle -1,t^{12}\rangle\cdot \rho_1+\langle -1,t_{13}\rangle \cdot \rho_2
\]
where $\rho_1:k(y_1)\to \mathrm{Ext}^1_{\mathcal O_{X,y_1}}(k(y_1),\mathcal O_{X,y_1})$ is defined by mapping $1$ to the Koszul complex $Kos(t^{13})$ associated with the regular sequence $t^{13}$, and similarly $\rho_2:k(y_2)\to \mathrm{Ext}^1_{\mathcal O_{X,y_2}}(k(y_2),\mathcal O_{X,y_2})$ is defined by $1\mapsto Kos(t_{12})$. Using \cite[Section 3.5]{FaselChowWitt}, the boundary $d_1$ of the above generator is of the form $\nu_1+\nu_2$, where
\[
\nu_1:k(z) \longrightarrow \mathrm{Ext}^2_{\mathcal O_{X,z}}(k(z),\mathcal O_{X,z})
\]
is defined by $1\mapsto Kos(t^{13},t^{12})$ and
\[
\nu_2:k(z) \longrightarrow \mathrm{Ext}^2_{\mathcal O_{X,z}}(k(z),\mathcal O_{X,z})
\]
is defined by $1\mapsto Kos(t_{12},t_{13})$. Recall from the proof of Lemma \ref{lem:H1K2} that $t^{11}\in \mathcal{O}_{X,z}^\times$ and it follows thus from the identities $\sum_{j=1}^3 t^{1j}t_{jk}=0$ for $k=1,2$ that we have
\[
\begin{pmatrix} t_{12} \\ t_{13}\end{pmatrix}=\begin{pmatrix} -t_{32}/t^{11} & -t_{22}/t^{11}  \\  -t_{33}/t^{11} & -t_{23}/t^{11}\end{pmatrix} \begin{pmatrix} t^{13} \\ t^{12}\end{pmatrix}.
\]
Now $t_{32}t_{23}-t_{22}t_{33}=-t^{11}$
and $t^{11}t_{11}=1$ modulo $\langle t^{12},t^{13}\rangle$ and we therefore get
\[
\nu_1+\nu_2=\langle 1,t_{11}\rangle \cdot \nu_1=(\langle 1,1\rangle +\langle -1,t_{11}\rangle)\cdot \nu_1
\]
A simple computation shows that $\langle 1,1\rangle \cdot \nu_1$ is the boundary of $(\langle 1,t_{13}\rangle \otimes \langle 1,t_{13}\rangle)\cdot \rho_2$ and therefore vanishes in $H^2(SL_3,{\mathbf I}^3)$. Now the class of $\langle -1,t_{11}\rangle\cdot \nu_1$ in $H^2(SL_3,\bar {\mathbf I}^3)=H^2(SL_3,\K^{\Mi}_3/2)$ is precisely a generator as shown by Corollary \ref{cor:explicit}. Thus, $\Phi_{1,2}:H^1(SL_3,\K^{\Mi}_2/2)\to H^2(SL_3,\K^{\Mi}_3/2)$ is an isomorphism.
\end{proof}

\subsection{Identification of $\Phi_{i-1,i,\L}$}
\label{ss:identification}
If $\mathcal L$ is a line bundle over our smooth $k$-scheme $X$, we write $\overline c_1(\mathcal L)$ for its first Chern class in $H^1(X,\K_1^{\Mi}/2)=CH^1(X)/2$.

\begin{thm}\label{thm:twistplus}
\label{thm:cohomologyoperation}
For any smooth scheme $X$, any $i,j\in\N$ and any line bundle $\mathcal L$ over $X$, we have
\[
\Phi_{i,j,\mathcal L}=(\Phi_{i,j}+\overline c_1(\mathcal L)\cup).
\]
\end{thm}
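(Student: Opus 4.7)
The plan is to prove the identity by combining naturality, a reduction to a universal example, and an explicit base-case calculation in the twisted Gersten-Witt complex.

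\textbf{Step 1: Naturality.} Both $\Phi_{i,j,\mathcal L}$ and $\Phi_{i,j}+\bar c_1(\mathcal L)\cup$ are natural with respect to pullbacks of pairs $(X,\mathcal L)$. The former inherits naturality from the defining extension $0\to\mathbf I^{j+1}(\mathcal L)\to\mathbf I^j(\mathcal L)\to\bar{\mathbf I}^j\to 0$, which pulls back along a morphism $f:Y\to X$ to the corresponding extension for $f^*\mathcal L$; the latter from naturality of cup product and first Chern classes. I would also establish an analogous bi-stability statement for $\Phi_{i,j,\mathcal L}$, following the same unwinding of definitions used to prove Proposition \ref{prop:bistable}.

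\textbf{Step 2: Additivity and reduction to a universal line bundle.} Setting $D(\mathcal L):=\Phi_{i,j,\mathcal L}-\Phi_{i,j}$, I would verify that $D$ is additive in $\mathcal L$ under tensor product, $D(\mathcal L\otimes\mathcal L')=D(\mathcal L)+D(\mathcal L')$, by comparing the twisted Gersten-Witt boundaries on a common \v{C}ech cover. Since $\bar c_1$ is additive on $\mathrm{Pic}/2$, this makes both sides of the desired identity additive in $\mathcal L$. Every line bundle on a smooth quasi-projective $k$-scheme may be written as $\mathcal M_1\otimes\mathcal M_2^{\vee}$ with $\mathcal M_i$ very ample, hence pulled back from $\mathcal O(1)$ on some $\mathbb P^{N_i}$. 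Naturality and additivity therefore reduce the verification to the universal pair $(\mathbb P^N,\mathcal O(-1))$. The projective bundle formula for $H^*(\mathbb P^N,\bar{\mathbf I}^*)$ together with bi-stability reduces this further to a single low-dimensional base case.

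\textbf{Step 3: Base-case computation.} Compute the base case directly in the twisted Gersten-Witt complex $C^\bullet(\mathbb P^N,\mathbf I^*(\mathcal O(-1)))$: choose a lift of a generating representative across the quotient $\mathbf I^j(\mathcal O(-1))\to\bar{\mathbf I}^j$, apply the differential, and reduce modulo $\mathbf I^{j+2}(\mathcal O(-1))$. The twist by $\mathcal O(-1)$ contributes an extra Pfister summand of the form $\langle\langle g\rangle\rangle\cdot(-)$, where $g$ is a transition function for $\mathcal O(-1)$; its reduction in $\bar{\mathbf I}^{j+1}$ is precisely $\bar c_1(\mathcal O(-1))\cup(-)$, yielding the claimed correction after subtracting the untwisted boundary $\Phi_{i,j}$.

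The main obstacle is Step 3 together with the verification of additivity in Step 2: both require careful bookkeeping of how transition functions of $\mathcal O(-1)$ enter the twisted Gersten-Witt differential via Pfister forms, and a verification that their reductions modulo successive powers of $\mathbf I$ produce exactly $\bar c_1(\mathcal L)\cup(-)$ and not some higher-order correction. The naturality and reduction to the universal case are formal once additivity is in hand.
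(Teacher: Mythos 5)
Your overall intuition in Step 3 --- that the twist enters the Gersten--Witt differential through a rank-one form attached to a trivialization of $\mathcal L$, whose reduction modulo the next power of $\mathbf I$ is $\bar c_1(\mathcal L)\cup$ --- is the right mechanism, and it is essentially what the paper's proof isolates. But the reduction scheme you build around it has two genuine gaps. First, the reduction to the ``universal pair'' $(\Proj^N,\mathcal O(-1))$ does not work: the operation $\Phi_{i,j,\mathcal L}$ is evaluated on an arbitrary class $\alpha\in H^i(X,\bar{\mathbf I}^j)$, and naturality in a map $f:X\to\Proj^N$ classifying $\mathcal L$ only controls the operation on classes of the form $f^*\beta$ with $\beta$ on $\Proj^N$. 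A general $\alpha$ is not pulled back from $\Proj^N$, and the groups $H^i(\Proj^N,\bar{\mathbf I}^j)$ (computable by the projective bundle formula) are far too small to detect the operation. A correct universal-example argument would require a classifying object for the \emph{pair} (line bundle, degree-$(i,j)$ class), i.e.\ something like a product of $B\gm{}$ with a motivic Eilenberg--Mac Lane space, together with a computation of the relevant group of natural transformations --- none of which is supplied. Second, the additivity $D(\mathcal L\otimes\mathcal L')=D(\mathcal L)+D(\mathcal L')$, on which the whole reduction rests, is asserted rather than proved: the Gersten--Witt complex is not a \v{C}ech complex, and comparing the twisted differentials $d^{\mathcal L\otimes\mathcal L'}$, $d^{\mathcal L}$, $d^{\mathcal L'}$ is precisely the hard bookkeeping the theorem is about. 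Since $D(\mathcal L)=\bar c_1(\mathcal L)\cup$ is the statement to be proved, establishing additivity of $D$ directly is essentially as difficult as the theorem itself, so Step 2 does not actually reduce the problem.

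For comparison, the paper's proof avoids any universality or additivity argument and works directly on an arbitrary $(X,\mathcal L,\alpha)$: it passes to the total space $p:L\to X$ of $\mathcal L$ (legitimate by homotopy invariance of the twisted Gersten--Witt cohomology), represents a lift of $\alpha$ by a symmetric complex $\psi$ in Balmer's framework and the twist by the zero section $s:\mathcal O_L\to p^*\mathcal L$ viewed as a symmetric morphism whose boundary $d_0^{p^*\mathcal L}(s)$ represents $\bar c_1(p^*\mathcal L)$, and then applies the Leibniz formula $d_i^{p^*\mathcal L}(p^*\psi\otimes s)=d_i(p^*\psi)\otimes s+p^*\psi\otimes d_0^{p^*\mathcal L}(s)$. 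The three terms are exactly $\Phi_{i,j,\mathcal L}(\alpha)$, $\Phi_{i,j}(\alpha)$ and $\bar c_1(\mathcal L)\cup\alpha$ after reduction to $\bar{\mathbf I}^{j+1}$. If you want to salvage your outline, the Leibniz formula is the ingredient that replaces both your additivity claim and your base-case computation.
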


\begin{proof}
In outline, the proof will proceed as follows.  We consider the total space of the line bundle ${\mathcal L}$ over $X$.  By pull-back stability of the operation and homotopy invariance, we can relate the operation $\Phi_{i,j,{\mathcal L}}$ with the operation $\Phi_{i,j}$ on the total space of the line bundle ${\mathcal L}$, with a twist coming from the first Chern class of the line bundle via the various identifications.  To establish the result, we track the action of $\Phi_{i,j,{\mathcal L}}$ on suitable explicit representatives of cohomology classes through the identifications just mentioned; for this, we use symmetric complexes and some ideas of Balmer.

As in the proof of Proposition \ref{prop:nontriviality}, we consider the Gersten-Witt complex of $X$ (filtered by powers of the fundamental ideal) $C(X,{\mathbf I}^j(\mathcal L))$:
\[
\xymatrix@C=1.2em{I^j(\mathcal L)(k(X))\ar[r]^-{d^{\mathcal L}_0} & \displaystyle{\bigoplus_{x\in X^{(1)}} I^{j-1}(\mathcal L)_{fl}(k(x))}\ar[r]^-{d^{\mathcal L}_1} &\displaystyle{\bigoplus_{x\in X^{(2)}} I^{j-2}(\mathcal L)_{fl}(k(x)) }\ar[r]^-{d^{\mathcal L}_2} & \ldots.
}
\]
In the case where $\mathcal L=\mathcal O_X$, we will drop ${\mathcal L}$ from the notation. Recall that there is an exact sequence of complexes
\[
0 \longrightarrow C(X,{\mathbf I}^{j+1}(\mathcal L)) \longrightarrow C(X,{\mathbf I}^j(\mathcal L)) \longrightarrow C(X,\bar {\mathbf I}^j) \longrightarrow 0.
\]
If $\alpha\in H^i(X,\bar {\mathbf I}^j)$, then $\Phi_{i,j,\mathcal L}(\alpha)$ is defined as follows. If $\alpha^\prime\in C^i(X,{\mathbf I}^j(\mathcal L))$ is any lift of $\alpha$, then its boundary $d^{\mathcal L}_i(\alpha^\prime)\in C^{i+1}(X,{\mathbf I}^j(\mathcal L))$ is the image of a unique cycle $\beta\in C^{i+1}(X,{\mathbf I}^{j+1}(\mathcal L))$. The reduction of $\beta$ in $C^{i+1}(X,\bar {\mathbf I}^{j+1})$ is precisely $\Phi_{i,j,\mathcal L}(\alpha)$.

Let us observe next that if $p:L\to X$ is the total space of $\mathcal L$, then $p$ induces morphisms of complexes $p^*$ (\cite[Corollaire 9.3.2]{FaselChowWitt}) fitting into the following commutative diagram:
\[
\xymatrix{0\ar[r] & C(X,{\mathbf I}^{j+1}(\mathcal L))\ar[r]\ar[d]^-{p^*} & C(X,{\mathbf I}^j(\mathcal L))\ar[r]\ar[d]^-{p^*} & C(X,\bar {\mathbf I}^j)\ar[r]\ar[d]^-{p^*} & 0 \\
0\ar[r] & C(L,{\mathbf I}^{j+1}(p^*\mathcal L))\ar[r] & C(L,{\mathbf I}^j(p^*\mathcal L))\ar[r] & C(L,\bar {\mathbf I}^j)\ar[r] & 0.}
\]
By homotopy invariance, the vertical morphisms induce isomorphisms on cohomology groups by \cite[Th\'eor\`eme 11.2.9]{FaselChowWitt}.  We use these identifications to replace $X$ by $L$ in what follows.

Now, let us recall how to obtain explicit representatives for elements of $H^i(X,\bar {\mathbf I}^j)$; this involves the formalism of \cite{Balmer02}.  Let $\alpha\in H^i(X,\bar {\mathbf I}^j)$ and let $\alpha^\prime\in C^i(X,{\mathbf I}^j)$ be a lift of $\alpha$. Under the equivalences of \cite[Theorem 6.1, Proposition 7.1]{Balmer02}, $\alpha^\prime$ can be seen as a complex $P_\bullet$ of finitely generated $\mathcal O_X$-locally free modules, together with a symmetric morphism (for the $i$-th shifted duality)
\[
\psi:P_\bullet \longrightarrow T^i\mathrm{Hom}(P_\bullet,\mathcal O_X)
\]
whose cone is supported in codimension $\geq i+1$. By definition, $d_i(\alpha^\prime)$ is the localization at the points of codimension $i+1$ of the symmetric quasi-isomorphism on the cone of $\psi$ (constructed for instance in \cite[Proposition 1.2]{Balmer02}), after d\'evissage (\cite[Theorem 6.1, Proposition 7.1]{Balmer02}).

The first Chern class of $p^*{\mathcal L}$ appears in a natural way using this language.  We want to choose a representative of $\bar{c}_1(p^*{\mathcal L})$ in $H^1(L,\K^{\Mi}_1/2) \cong H^1(L,\bar {\mathbf I})$.  A lift of this element to $H^1(L,{\mathbf I}(p^*\mathcal L))$ can be described as follows.  The zero section
\[
s: \mathcal O_L \longrightarrow p^*\mathcal L
\]
can be seen as a symmetric morphism $\mathcal O_L\to \mathrm{Hom}_{\mathcal O_X}(\mathcal O_L,p^*\mathcal L)$, which is an isomorphism after localization at the generic point of $L$, and whose cone is supported in codimension $1$. It follows that $s$ can be thought of as an element of $C^0(L,{\mathbf W}(p^*\mathcal L))$.  The class of $d_0^{p^*\mathcal L}(s)$ can be viewed as an element of $H^1(L,{\mathbf I}(p^*\mathcal L))$, and its projection in $H^1(L,\bar {\mathbf I})=Pic(L)/2$ is precisely the first Chern class of $p^*\mathcal L$ (\cite[proof of Lemma 3.1]{FaselIJ}).

To lift an element $\alpha\in H^i(X,\bar {\mathbf I}^j)$ to an element $\alpha^\prime$ in $C^i(X,{\mathbf I}^j(\mathcal L))$, we will first find a lift $\alpha^{\prime\prime}\in C^i(X,{\mathbf I}^j)$ and then multiply by $s$ in a sense to be explained more carefully below to obtain our lift $\alpha^\prime$.  A Leibniz-type formula can then be used to compute the boundary of this product and derive the formula in the statement of our theorem.

Using the product structure (say the left one) on derived categories with duality of \cite{Gille03c}, we can obtain an element of $C^i(L,{\mathbf I}^j(p^*\mathcal L))$ lifting $p^*\alpha\in H^i(L,\bar {\mathbf I}^j)$ using the symmetric morphism
\[
p^*\psi\otimes s:p^*P_\bullet \longrightarrow T^i\mathrm{Hom}(p^*P_\bullet,p^*\mathcal L).
\]
The degeneracy locus of $p^*\psi$ in the sense of \cite[Definition 3.2]{Balmer05a} is, by definition, the support of its cone, which has codimension $\geq i+1$ in $L$. The degeneracy locus of $s$ has codimension $1$ in $L$ and intersects the degeneracy locus of $p^*\psi$ transversally.

Now, we are in a position to apply the Leibniz formula of \cite[Theorem 5.2]{Balmer05a} (while the hypotheses of the quoted result are not satisfied in our situation, the proof of \cite[Propostion 4.7]{Fasel07} explains why the formula continues to hold in the case where the intersection of degeneracy loci is transversal).  Since we will momentarily consider the sheaf $\bar {\mathbf I}^j$ whose cohomology groups are $2$-torsion, we can ignore signs, in which case the Leibniz formula gives the equality:
\begin{equation}\label{eqn:main}
d_i^{p^*\mathcal L}(p^*\psi\otimes s)=d_i(p^*\psi)\otimes s + p^*\psi\otimes d_0^{p^*\mathcal L}(s)
\end{equation}
in $C^{i+1}(L,{\mathbf I}^j(p^*\mathcal L))$.

Since $p^*\alpha\in H^i(L,\bar {\mathbf I}^j)$ and $p^*\psi\otimes s$ lifts $p^*\alpha$ in $C^i(L,{\mathbf I}^j(p^*\mathcal L))$ it follows that $d_i^{p^*\mathcal L}(p^*\psi\otimes s)$ actually belongs to $C^{i+1}(L,{\mathbf I}^{j+1}(p^*\mathcal L))$. For the same reason, we have $d_i(p^*\psi)\in C^{i+1}(L,{\mathbf I}^{j+1})$ and then $d_i(p^*\psi)\otimes s\in C^{i+1}(L,{\mathbf I}^{j+1}(p^*\mathcal L))$. Thus $p^*\psi\otimes d_0^{p^*\mathcal L}(s)$ is in $C^{i+1}(L,{\mathbf I}^{j+1}(p^*\mathcal L))$ as well. It follows that all three terms in (\ref{eqn:main}) define classes in $C^{i+1}(L,\bar {\mathbf I}^{j+1})$. The left term yields a class in $H^{i+1}(L,\bar {\mathbf I}^{j+1})$ which is $\Phi_{i,j,p^*\mathcal L}(p^*\alpha)$ by definition. The middle term projects to $\Phi_{i,j}(p^*\alpha)$ and the right-hand term to the class $p^*\alpha\cdot c_1(p^*\mathcal L)$ in $H^{i+1}(L,\bar {\mathbf I}^{j+1})$.
\end{proof}

\section{Differentials, cohomology operations and the Euler class}
Having established the basic properties of the differentials in the Pardon spectral sequence, we now pass to their identification with known operations on motivic cohomology.

\subsection{Differentials in terms of motivic cohomology}
Let us first recall some notation.  Write $\mathcal H^j$ for the Zariski sheaf associated with the presheaf $U\mapsto H^j_{\et}(U,\Z/2)$.  For integers $p,q$, write $H^{p,q}(X,\Z/2)$ for the motivic cohomology groups with $\Z/2$ coefficients as defined by Voevodsky (see, e.g., \cite[Lecture 3]{MVW}); these groups are by construction hypercohomology of certain complexes of Zariski sheaves.
We begin by recalling a result of Totaro \cite[Theorem 1.3]{Totaro}.

\begin{thm}\label{thm:deep}
Suppose $k$ is a field having characteristic unequal to $2$, and $X$ is a smooth $k$-scheme. For any integer $j\geq 0$, there is a long exact sequence of the form:
\[
\xymatrix@C=1.1em{\ldots\ar[r] & H^{i+j,j-1}(X,\Z/2)\ar[r] & H^{i+j,j}(X,\Z/2)\ar[r] & H^i(X,\mathcal H^j)\ar[r] & H^{i+j+1,j-1}(X,\Z/2)\ar[r] & \ldots};
\]
this exact sequence is functorial in $X$.
\end{thm}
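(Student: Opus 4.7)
The plan is to deduce this as a consequence of the Beilinson--Lichtenbaum property for mod $2$ motivic cohomology, which in turn follows from Voevodsky's proof of the Milnor conjecture on the norm-residue homomorphism (the case $q=2$ being due to Merkurjev). Let $\pi \colon (\Sm_k)_{\et} \to (\Sm_k)_{\Zar}$ denote the canonical morphism of sites. Since $\mathrm{char}(k)\neq 2$, the étale sheaves $\mu_2^{\otimes m}$ are canonically isomorphic to the constant sheaf $\Z/2$ for every $m \geq 0$, and in particular $R^j\pi_* \Z/2 \cong \mathcal H^j$ recovers the sheaf appearing in the statement.

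The Beilinson--Lichtenbaum theorem provides, for each $j\geq 0$, a canonical quasi-isomorphism
\[
\Z/2(j)_{\Zar} \;\simeq\; \tau^{\leq j} R\pi_* \Z/2
\]
in the derived category of Zariski sheaves of $\Z/2$-modules on $X$, compatibly as $j$ varies with the canonical map $\Z/2(j-1) \to \Z/2(j)$ on the left and with the truncation inclusion $\tau^{\leq j-1} \hookrightarrow \tau^{\leq j}$ on the right. Granting this, I would feed in the canonical distinguished triangle
\[
\tau^{\leq j-1} R\pi_* \Z/2 \longrightarrow \tau^{\leq j} R\pi_* \Z/2 \longrightarrow \mathcal H^j[-j] \longrightarrow (\tau^{\leq j-1} R\pi_* \Z/2)[1],
\]
whose third term has a single non-zero cohomology sheaf, concentrated in degree $j$, precisely because $R^j \pi_* \Z/2 = \mathcal H^j$.

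Taking Zariski hypercohomology $H^n(X,-)$ of this triangle and substituting the two identifications from Beilinson--Lichtenbaum yields a long exact sequence
\[
\cdots \to H^{n,j-1}(X,\Z/2) \to H^{n,j}(X,\Z/2) \to H^{n-j}(X,\mathcal H^j) \to H^{n+1,j-1}(X,\Z/2) \to \cdots.
\]
Re-indexing via $n = i+j$ produces exactly the sequence claimed. Functoriality in $X$ is automatic: $\pi$, the derived pushforward $R\pi_*$, the truncation functors, and the Beilinson--Lichtenbaum quasi-isomorphism are all natural with respect to morphisms of smooth $k$-schemes.

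The only substantive ingredient is the Beilinson--Lichtenbaum identification $\Z/2(j) \simeq \tau^{\leq j} R\pi_* \Z/2$; everything else is a formal manipulation of the canonical truncation triangle and its hypercohomology. The hard part is thus entirely absorbed into the citation of Voevodsky's theorem, after which no further computation on $X$ is needed.
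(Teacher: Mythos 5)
Your argument is correct and is precisely the one the paper alludes to (and that Totaro carries out): apply Beilinson--Lichtenbaum to identify $\Z/2(j)$ with $\tau^{\leq j}R\pi_*\Z/2$, take hypercohomology of the canonical truncation triangle with third term $\mathcal H^j[-j]$, and re-index. The only point worth making explicit is that the ``canonical map'' $\Z/2(j-1)\to\Z/2(j)$ is multiplication by the class $\tau\in H^{0,1}(k,\Z/2)$, which under Beilinson--Lichtenbaum corresponds to the truncation inclusion exactly as you use it.
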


\begin{proof}[Comments on the proof.]
This result requires Voevodsky's affirmation of Milnor's conjecture on the mod $2$ norm residue homomorphism \cite{VMilnor} as well as the Beilinson-Lichtenbaum conjecture, which is equivalent to the Milnor conjecture by results of Suslin-Voevodsky and Geisser-Levine.  The functoriality assertion of the statement is evident from inspection of the proof (it appears by taking hypercohomology of a distinguished triangle). \end{proof}

We will use the above exact sequence in the guise established in the following result.

\begin{cor}\label{cor:deep}
For any $i\in\N$ and any smooth scheme $X$ over a perfect field $k$ with $\mathrm{char}(k)\neq 2$, the above sequence induces an isomorphism
\[
H^{2i+1,i+1}(X,\Z/2)\simeq H^{i}(X,\bar {\mathbf I}^{i+1})
\]
that is functorial in $X$.
\end{cor}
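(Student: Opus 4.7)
The plan is to specialize the long exact sequence of Theorem \ref{thm:deep} to the bidegree $j = i+1$ and extract the middle isomorphism by identifying the Zariski sheaf $\mathcal{H}^{i+1}$ with $\bar{\mathbf{I}}^{i+1}$ and showing that the two flanking motivic cohomology groups vanish for bidegree reasons.

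For the sheaf identification, I would combine two ingredients. The Milnor conjecture on quadratic forms (Orlov--Vishik--Voevodsky), already invoked in Section \ref{sec:sheaves}, provides an isomorphism $\K^{\Mi}_n/2 \xrightarrow{\sim} \bar{\mathbf{I}}^n$. The Bloch--Kato conjecture (Voevodsky's affirmation of the mod $2$ norm-residue isomorphism), which underlies Theorem \ref{thm:deep} itself, provides a natural isomorphism $\K^{\Mi}_n/2 \xrightarrow{\sim} \mathcal{H}^n$ of Zariski sheaves. Composing these gives a natural isomorphism $\mathcal{H}^{i+1} \cong \bar{\mathbf{I}}^{i+1}$ and hence an identification $H^i(X,\mathcal{H}^{i+1}) \cong H^i(X,\bar{\mathbf{I}}^{i+1})$, functorial in $X$.

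Next, substituting $j = i+1$ into the exact sequence of Theorem \ref{thm:deep} produces
\[
H^{2i+1,i}(X,\Z/2) \longrightarrow H^{2i+1,i+1}(X,\Z/2) \longrightarrow H^{i}(X,\mathcal{H}^{i+1}) \longrightarrow H^{2i+2,i}(X,\Z/2).
\]
Both outer terms are of the form $H^{p,q}(X,\Z/2)$ with $q = i$ and $p \in \{2i+1,\,2i+2\}$, so $p > 2q$ in each case. The standard vanishing for motivic cohomology of smooth schemes, deduced from Bloch's identification $H^{p,q}(X,\Z) \cong CH^q(X,2q-p)$ of integral motivic cohomology with higher Chow groups (which vanish when $2q-p<0$) and propagated to $\Z/2$ coefficients by the two-term Bockstein long exact sequence, gives the vanishing of both flanking terms. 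The middle arrow is therefore an isomorphism, yielding $H^{2i+1,i+1}(X,\Z/2) \cong H^i(X,\bar{\mathbf{I}}^{i+1})$.

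Functoriality in $X$ is inherited directly from the functoriality clause in Theorem \ref{thm:deep} together with the manifest naturality of the sheaf identification $\mathcal{H}^{i+1} \cong \bar{\mathbf{I}}^{i+1}$. There is no real obstacle here: the argument is a bookkeeping exercise once Theorem \ref{thm:deep} and the Milnor/Bloch--Kato package are in hand; the only thing to verify is that the motivic vanishing bound $p \leq 2q$ applies to both neighboring terms, which it does with room to spare.
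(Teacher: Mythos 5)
Your proposal is correct and follows essentially the same route as the paper: specialize Theorem \ref{thm:deep} to $j=i+1$, kill both flanking terms by the vanishing of $H^{p,q}(X,\Z/2)$ for $p>2q$ (the paper cites \cite[Theorem 19.3]{MVW} directly rather than rederiving it from higher Chow groups and a Bockstein, but this is the same fact), and identify $\mathcal H^{i+1}\cong\bar{\mathbf I}^{i+1}$ via the two Milnor conjectures. No gaps.
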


\begin{proof}
The exact sequence of Theorem \ref{thm:deep} reads as follows for $j=i+1$
\[
\xymatrix@C=1.1em{\ldots\ar[r] & H^{2i+1,i}(X,\Z/2)\ar[r] & H^{2i+1,i+1}(X,\Z/2)\ar[r] & H^i(X,{\mathcal H}^{i+1})\ar[r] & H^{2i+2,i}(X,\Z/2)\ar[r] & \ldots}
\]
Since $k$ is perfect, we have $H^{p,i}(X,\Z/2)=0$ for any $p\geq 2i+1$ by \cite[Theorem 19.3]{MVW} and from this we can conclude that the middle arrow is an isomorphism.  Since the exact sequence is functorial in $X$, it follows immediately that the isomorphism just mentioned has the same property.  Now, the affirmation of the Milnor conjecture on the mod $2$ norm residue homomorphism also implies that $\K^{\Mi}_{i+1}/2$ can be identified as a sheaf with ${\mathcal H}^{i+1}$, while the affirmation of the Milnor conjecture on quadratic forms yields an identification of sheaves $\K^{\Mi}_{i+1}/2 \cong \bar {\mathbf I}^{i+1}$.  Combining these isomorphisms yields an isomorphism ${\mathcal H}^{i+1} \cong \bar {\mathbf I}^{i+1}$ and therefore an identification of cohomology with coefficients in these sheaves functorial in the input scheme.
\end{proof}

Voevodsky defined in \cite[p. 33]{VRed} motivic Steenrod operations $Sq^{2i}: H^{p-2i,q-i}(X,\Z/2) \to H^{p,q}(X,\Z/2)$.  The resulting operations are bi-stable in the sense that they are compatible with $\pone$-suspension in the same sense as described in the previous section.  Via the isomorphism of Corollary \ref{cor:deep}, we can view $Sq^2$ as an operation
\[
Sq^2: H^{i-1}(X,\bar {\mathbf I}^{i}) \longrightarrow H^{i}(X,\bar {\mathbf I}^{i+1}),
\]
which is again bi-stable in the sense that it is compatible with $\pone$-suspension.  The algebra of bistable cohomology operations in motivic cohomology with $\Z/2$-coefficients was determined by Voevodsky in characteristic $0$, \cite{VMEMSpaces} and extended to fields having characteristic unequal to $2$ in \cite{HKO}.  Using these results, we may now identify the operation $\Phi_{i-1,i}$ described in Definition \ref{defn:phiijL} in more explicit terms.

\begin{cor}\label{cor:sq2}
\label{cor:connectinghomomissq2}
We have an identification $\Phi_{i-1,i} = Sq^2$.
\end{cor}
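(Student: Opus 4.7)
The plan is to view $\Phi_{i-1,i}$ as a bi-stable operation on mod $2$ motivic cohomology and then invoke the classification of such operations. By Corollary~\ref{cor:deep}, there is a functorial isomorphism $H^{i-1}(X,\bar{\mathbf{I}}^i) \cong H^{2i-1,i}(X,\Z/2)$ and likewise $H^i(X,\bar{\mathbf{I}}^{i+1}) \cong H^{2i+1,i+1}(X,\Z/2)$, so $\Phi_{i-1,i}$ is transported to a natural transformation of presheaves $H^{2i-1,i}(-,\Z/2) \to H^{2i+1,i+1}(-,\Z/2)$. I would first observe that this identification is compatible with $\pone$-suspension: the long exact sequence of Theorem~\ref{thm:deep} is obtained as the hypercohomology sequence of a distinguished triangle in the derived category of Zariski sheaves, and both the motivic cohomology groups and the $\bar{\mathbf{I}}^j$-cohomology groups satisfy the projective bundle formula. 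Combined with Proposition~\ref{prop:bistable}, this shows that the transported operation is bi-stable.

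Next I would invoke the theorems of Voevodsky~\cite{VMEMSpaces} (in characteristic $0$) and Hoyois--Kelly--{\O}stvaer~\cite{HKO} (in characteristic unequal to $2$) describing the algebra of bi-stable cohomology operations on mod $2$ motivic cohomology as the motivic Steenrod algebra. The subspace of operations of bidegree $(2,1)$ is one-dimensional over $\Ftwo$ (the coefficient ring $H^{*,*}(\Spec k,\Z/2)$ is concentrated in bidegrees $(p,q)$ with $p \leq q$, so bidegree $(2,1)$ forbids any non-trivial coefficient multiplier other than $1$): the only elements are $0$ and $Sq^2$. Hence $\Phi_{i-1,i}$ is either zero or equal to $Sq^2$.

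Finally, Proposition~\ref{prop:nontriviality} provides the smooth scheme $SL_3$ on which $\Phi_{1,2}$ acts non-trivially, so we must have $\Phi_{i-1,i} = Sq^2$. The main technical point -- and the only one requiring real care -- is verifying that the isomorphism in Corollary~\ref{cor:deep} is genuinely compatible with $\pone$-suspension, so that bi-stability of $\Phi_{i-1,i}$ (proved for the $\bar{\mathbf{I}}$-side in Proposition~\ref{prop:bistable}) transfers to bi-stability as an operation on motivic cohomology. Once this compatibility is established, the rest is a direct application of the classification results.
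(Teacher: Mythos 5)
Your argument is essentially the paper's: bi-stability (Proposition \ref{prop:bistable}) plus functoriality identify $\Phi_{i-1,i}$ with a bi-stable operation of bidegree $(2,1)$ on mod $2$ motivic cohomology via Corollary \ref{cor:deep}, the classification of Voevodsky and Hoyois--Kelly--{\O}stvaer pins it down to $0$ or $Sq^2$, and Proposition \ref{prop:nontriviality} (the $SL_3$ computation) excludes $0$. One caution about the only step where you supply your own reasoning: your parenthetical dimension count is too quick. The motivic Steenrod algebra is a free $H^{*,*}(\Spec k,\Z/2)$-module on admissible monomials, and since $Sq^1$ has bidegree $(1,0)$ while $\rho = [-1] \in H^{1,1}(\Spec k,\Z/2)$ has bidegree $(1,1)$, the product $\rho\cdot Sq^1$ also has bidegree $(2,1)$; so "no non-trivial coefficient multiplier is possible'' is false over a field where $-1$ is not a square, and the bidegree-$(2,1)$ part need not be one-dimensional. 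The paper sidesteps this by asserting the group is $\Z/2\cdot Sq^2$ with a direct citation of the classification theorems, but if you want a self-contained justification you must either work over a field with $\rho=0$ or separately rule out a $\rho\cdot Sq^1$ contribution.
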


\begin{proof}
The operation $\Phi_{i-1,i}$ is bistable by Proposition \ref{prop:bistable}, commutes with pullbacks by construction, and changes bidegree by $(2,1)$ so it is pulled back from a universal class on a motivic Eilenberg-Mac Lane space.  On the other hand, the group of bi-stable operations of bidegree $(2,1)$ is isomorphic to $\Z/2$ generated by $Sq^2$: if $k$ has characteristic zero, this follows from \cite[Theorem 3.49]{VMEMSpaces}, while if $k$ has characteristic unequal to $2$, this follows from \cite[Theorem 1.1]{HKO}.  Since the operation $\Phi_{i-1,i}$ is non-trivial by Proposition \ref{prop:nontriviality}, it follows that it must be equal to $Sq^2$.
\end{proof}

The next result is an immediate consequence of Corollary \ref{cor:sq2} and Theorem \ref{thm:twistplus}.

\begin{thm}
\label{thm:main}
Suppose $k$ is a field having characteristic unequal to $2$, and $X$ is a smooth $k$-scheme.  For any integer $i > 0$, and any rank $r$ vector bundle $\xi: \mathcal{E} \to X$, the operation $(Sq^2 + \overline c_1(\xi) \cup)$ coincides with $\Phi_{i-1,i,\det \xi}$.
\end{thm}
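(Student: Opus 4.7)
My plan is that this theorem is a direct formal consequence of the two previously established results, so the proof should be a short combination, not a new computation.

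The first step is to invoke Theorem \ref{thm:twistplus} with the line bundle $\mathcal{L} = \det \xi$ and with indices $(i-1,i)$ in place of $(i,j)$. Since $i > 0$, both indices are nonnegative, so the hypotheses apply and we obtain the identification
\[
\Phi_{i-1,i,\det \xi} = \Phi_{i-1,i} + \overline{c}_1(\det \xi)\cup,
\]
as homomorphisms $H^{i-1}(X,\bar{\mathbf{I}}^i) \to H^i(X,\bar{\mathbf{I}}^{i+1})$.

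The second step is to substitute the identification of Corollary \ref{cor:sq2} (i.e.\ $\Phi_{i-1,i} = Sq^2$), which was deduced from the bistability established in Proposition \ref{prop:bistable}, the non-triviality established in Proposition \ref{prop:nontriviality}, and the classification of bistable mod $2$ motivic cohomology operations of bidegree $(2,1)$ from \cite{VMEMSpaces} and \cite{HKO} after the re-indexing isomorphism of Corollary \ref{cor:deep}. Combining this with the previous display yields
\[
\Phi_{i-1,i,\det \xi} = Sq^2 + \overline{c}_1(\det \xi) \cup.
\]

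The final step is purely conventional: by definition the first Chern class of a vector bundle agrees with the first Chern class of its determinant, so $\overline{c}_1(\xi) = \overline{c}_1(\det \xi)$ in $H^1(X,\K_1^{\Mi}/2) = CH^1(X)/2$. Substituting this gives the asserted equality $\Phi_{i-1,i,\det\xi} = Sq^2 + \overline{c}_1(\xi) \cup$. There is no genuine obstacle here; the real content has been absorbed into Theorem \ref{thm:twistplus} (the Leibniz-formula computation with symmetric complexes) and Corollary \ref{cor:sq2} (the appeal to the classification of bistable motivic operations together with the nontriviality witness on $SL_3$). The remark worth making is simply that the theorem is stated for a rank $r$ bundle to make the application to vector bundles transparent, although the operation $\Phi_{i-1,i,\det\xi}$ depends only on the line bundle $\det\xi$.
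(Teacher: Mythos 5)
Your proposal is correct and matches the paper exactly: the paper states Theorem \ref{thm:main} as an immediate consequence of Theorem \ref{thm:twistplus} and Corollary \ref{cor:sq2}, which is precisely the two-step combination you give (together with the standard convention $\overline{c}_1(\xi)=\overline{c}_1(\det\xi)$).
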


%%%%%%%%%%%%%%%%%%%%%%%%%%%%%%%%%%%%%%%%%%%%%%%%%%%%%%%%%%%%%%%%%%%%%%%%%%%%%%%%%%%%%%%%%%%%%%%%%%%%%%%%%%%%%%%%%%%%%

\subsection{The Euler class and secondary classes}
\label{s:eulerclasses}
The Euler class $e(\mathcal E)$ of a rank $d$ vector bundle $\xi: \mathcal{E} \to X$ is the only obstruction to splitting off a free rank $1$ summand, and it lives in $H^d(X,\K^{\MW}_d(\mathcal L))$ where $\mathcal L=\det \mathcal{E}$. Now, the Euler class is mapped to the top Chern class $c_d(\mathcal E)$ in $CH^d(X)$ under the homomorphism $H^d(X,\K^{\MW}_d(\mathcal L))\to H^d(X,\K^{\Mi}_d)=CH^d(X)$ induced by the morphism of sheaves $\K^{\MW}_d(\mathcal L)\to \K^{\Mi}_d$ and it follows that the vanishing of $e(\mathcal E)$ guarantees vanishing of $c_d(\mathcal E)$ in $CH^d(X)$ (see \cite[Proposition 6.3.1]{AsokFaselA3minus0} for this statement).

The vanishing of the top Chern class does not, in general, imply vanishing of the Euler class, as shown by the example of the tangent bundle to the real algebraic sphere of dimension $2$. For vector bundles with vanishing top Chern class, we can use Theorem \ref{thm:obstructions} to decide whether its Euler class vanishes, provided we work over a field $k$ of finite $2$-cohomological dimension. In the next theorem, we denote by $\Psi^n(\mathcal E)$ the obstruction classes $\Psi^n(e(\mathcal E))$ of Theorem \ref{thm:obstructions} associated to the Euler class $e(\mathcal E)$.

\begin{thm}
\label{thm:main2}
Suppose $k$ is a field having finite $2$-cohomological dimension, $X$ is a smooth $k$-scheme of dimension $d$ and $\xi: \mathcal{E} \to X$ is a rank $d$ vector bundle on $X$ with $c_d(\mathcal E) = 0$. The vector bundle $\mathcal E$ splits off a trivial rank $1$ summand if only if, in addition, $\Psi^n(\mathcal E) = 0$ for $n\geq 1$.
\end{thm}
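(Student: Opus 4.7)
The plan is to combine Morel's splitting criterion with Theorem \ref{thm:obstructions}, after identifying the leading obstruction $\Psi^0(e(\mathcal E))$ with the top Chern class $c_d(\mathcal E)$. By Morel's theorem (as invoked in the introduction), the rank $d$ vector bundle $\mathcal{E}$ splits off a trivial rank $1$ summand if and only if its Euler class $e(\mathcal E) \in H^d(X,\K^{\MW}_d(\det \mathcal E))$ vanishes. Thus the content of the statement is to reduce the vanishing of $e(\mathcal E)$, under the assumption $c_d(\mathcal E) = 0$, to the vanishing of all the higher obstructions $\Psi^n(\mathcal E)$ for $n \geq 1$.

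Next, I would apply Theorem \ref{thm:obstructions} with $\mathcal L = \det \mathcal E$ and $\alpha = e(\mathcal E)$: since $k$ has finite $2$-cohomological dimension, this yields obstructions $\Psi^n(\mathcal E) \in E(\det \mathcal E,\MW)^{d,d+n}_{\infty}$ for $n \geq 0$ such that $e(\mathcal E) = 0$ if and only if $\Psi^n(\mathcal E) = 0$ for every $n \geq 0$. It therefore remains only to verify that the hypothesis $c_d(\mathcal E) = 0$ is equivalent to the vanishing of the $n=0$ obstruction $\Psi^0(\mathcal E)$.

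For this, recall from the proof of Theorem \ref{thm:obstructions} that $\Psi^0(\mathcal E)$ is the class of $e(\mathcal E)$ in the quotient $F^0 H^d(X,\K^{\MW}_d(\det \mathcal E))/F^1 H^d(X,\K^{\MW}_d(\det \mathcal E))= E(\det \mathcal E,\MW)^{d,d}_{\infty}$. By Lemma \ref{lem:KMWss}, this group is canonically identified with $CH^d(X)$, and the identification is induced by the morphism of sheaves $\K^{\MW}_d(\det \mathcal E) \to \K^{\Mi}_d$ (at the last step of the filtration). As recalled at the beginning of Section \ref{s:eulerclasses}, the Euler class $e(\mathcal E)$ maps to the top Chern class $c_d(\mathcal E)$ under precisely this morphism; consequently $\Psi^0(\mathcal E) = c_d(\mathcal E)$, which vanishes by hypothesis.

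Combining the three steps, the biconditional of Theorem \ref{thm:obstructions} reduces, under the standing hypothesis $c_d(\mathcal E) = 0$, to the statement $e(\mathcal E) = 0 \iff \Psi^n(\mathcal E) = 0 \text{ for all } n \geq 1$, which together with Morel's splitting criterion is exactly the claim. The argument is really just a repackaging of results already established, and there is no genuine obstacle beyond the bookkeeping identification $\Psi^0 = c_d$; no analysis of the higher differentials or of the operations $\Phi_{i,j,\mathcal L}$ is needed at this stage, those being the content of the sharper Theorem \ref{thmintro:secondaryclass}.
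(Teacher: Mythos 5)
Your proposal is correct and is essentially the argument the paper intends: the theorem is stated without a separate proof precisely because it follows from Morel's splitting criterion, Theorem \ref{thm:obstructions} applied to $\alpha = e(\mathcal E)$ with $\mathcal L = \det\mathcal E$, and the identification $\Psi^0(\mathcal E) = c_d(\mathcal E)$ via $E(\mathcal L,\MW)^{d,d}_{\infty} \cong CH^d(X)$ from Lemma \ref{lem:KMWss}. Your explicit bookkeeping of the $n=0$ term is exactly the content the paper leaves implicit in the paragraph preceding the statement.
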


As mentioned in the introduction, the advantage of the computation of these higher obstruction classes over the computation of the Euler class is that the cohomology groups involved are with coefficients in cycle modules in the sense of Rost, which are a priori more manageable than cohomology with coefficients in more exotic sheaves such as Milnor-Witt $K$-theory. Moreover, Corollary \ref{cor:deep} shows that the differentials, at least in some range, can be identified with Steenrod operations, which are arguably more calculable. The obvious weakness of this approach is the appearance of the groups $\overline G_i$ defined in Section \ref{sec:Pardon}, though see Remark \ref{rem:2torsion} for a counterpoint.  Continuing with the assumption that our base field $k$ has finite cohomological dimension one can show that establishing the vanishing of finitely many obstructions (depending on the cohomological dimension) are sufficient to guarantee vanishing of all obstructions.  The next result completes the verification of Corollary \ref{corintro:finitelymanyobstructions} from the introduction.

\begin{cor}\label{cor:main2}
Assume $k$ is a field of $2$-cohomological dimension $s$, $X$ is a smooth $k$-scheme of dimension $d$ and $\xi:\mathcal E\to X$ is a rank $d$-vector bundle over $X$ with $c_d(\mathcal E)=0$.  The vector bundle $\mathcal E$ splits off a trivial rank $1$ summand if and only if $\Psi^n(\mathcal E) = 0$ for $n\leq s-1$.
\end{cor}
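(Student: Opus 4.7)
The plan is to combine Theorem \ref{thm:main2} with a cohomological vanishing argument that truncates the (a priori infinite) sequence of obstructions to a finite one once the $2$-cohomological dimension of the base is finite. By Theorem \ref{thm:main2}, the bundle $\mathcal E$ splits off a trivial rank $1$ summand if and only if $\Psi^n(\mathcal E)=0$ for every $n\geq 1$ (the hypothesis $c_d(\mathcal E)=0$ already handles $n=0$). Hence it suffices to prove that $\Psi^n(\mathcal E)=0$ automatically whenever $n\geq s$, so that only the obstructions in the range $1\leq n\leq s-1$ can be nontrivial.

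The key step is to bound the target groups of the higher obstructions. By Lemma \ref{lem:KMWss}, for $n\geq 1$ the group $E(\mathcal L,\MW)^{d,d+n}_\infty$ (where $\mathcal L=\det\mathcal E$) is a quotient of $E(\mathcal L)^{d,d+n}_{n+1}$, which is in turn a subquotient of the $E_2$-term $E(\mathcal L)^{d,d+n}_2=H^d(X,\bar{\mathbf I}^{d+n})$. Via the Milnor conjecture on quadratic forms, the latter group is isomorphic to $H^d(X,\K^{\Mi}_{d+n}/2)$. Therefore it is enough to establish the vanishing
\[
H^d(X,\K^{\Mi}_{d+n}/2)=0 \quad\text{for all } n\geq s.
\]

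This vanishing is precisely the statement of \cite[Proposition 5.1]{AsokFaselThreefolds} (which was already invoked in the proof of Theorem \ref{thm:obstructions} to force the spectral sequence to be bounded): when $k$ has $2$-cohomological dimension $s$, the sheaf $\K^{\Mi}_j/2$ identifies with $\mathcal H^j$ via the Milnor conjecture on the mod $2$ norm residue homomorphism, and a coniveau/degree argument shows that $H^p(X,\K^{\Mi}_j/2)=0$ once $j-p\geq s$. Applying this with $p=d$ and $j=d+n$ for $n\geq s$ gives the desired vanishing, so $E(\mathcal L,\MW)^{d,d+n}_\infty=0$ and hence $\Psi^n(\mathcal E)=0$ in this range.

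Putting everything together, only the classes $\Psi^n(\mathcal E)$ with $0\leq n\leq s-1$ can possibly be nontrivial; since $\Psi^0(\mathcal E)=c_d(\mathcal E)=0$ by hypothesis, the criterion reduces to the vanishing of $\Psi^n(\mathcal E)$ for $1\leq n\leq s-1$, which is then combined with Theorem \ref{thm:main2} to give the equivalence in the statement. The only real content beyond the previously established theorems is the correct invocation of the cohomological vanishing statement, and this is the step I would check most carefully — in particular, one should confirm that the vanishing applies uniformly to the twisted coefficients arising here, but this follows because $\bar{\mathbf I}^{d+n}$ is untwisted as a sheaf (the twist by $\mathcal L$ only enters the filtration pieces $\mathbf I^{j}(\mathcal L)$, not the successive quotients).
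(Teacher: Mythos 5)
Your proposal is correct and follows essentially the same route as the paper: reduce via Theorem \ref{thm:main2} and Lemma \ref{lem:KMWss} to the vanishing of $H^d(X,\bar{\mathbf I}^{d+n})$ for $n\geq s$, and invoke the cohomological vanishing result of \cite{AsokFaselThreefolds} (the paper cites Proposition 5.2 there rather than 5.1, but it is the same vanishing statement used earlier for boundedness of the spectral sequence). Your added observations — that the successive quotients $\bar{\mathbf I}^{d+n}$ are untwisted and that $\Psi^0$ is handled by the hypothesis $c_d(\mathcal E)=0$ — are consistent with, and slightly more explicit than, the paper's one-line argument.
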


\begin{proof}
In view of the definition of the higher obstructions $\Psi^n(\mathcal E)$, it suffices to show that $H^d(X,\bar {\mathbf I}^j)$ vanishes for $j\geq d+r$. This is \cite[Proposition 5.2]{AsokFaselThreefolds}, together with the identification of Nisnevich and Zariski cohomology with coefficients in $\bar {\mathbf I}^j$ explained in \cite[\S 2]{AsokFaselThreefolds}.
\end{proof}

Finally, combining all of the results established so far, we can complete the verification of Theorem \ref{thmintro:secondaryclass}.

\begin{proof}[Completion of proof of Theorem \ref{thmintro:secondaryclass}]
To identify the secondary obstruction $\Psi^1$ as the composition in the statement, we begin by observing that the group $E(\mathcal L,\MW)_\infty^{d,d+1}$ is the cokernel of the composite map
\[
\xymatrix@C=4em{H^{d-1}(X,\K_d^{\Mi})\ar[r] & H^{d-1}(X,\K_d^{\Mi}/2)\ar[r]^-{Sq^2+\overline c_1(\mathcal L)} & H^d(X,\K_{d+1}^M/2)}
\]
in view of Lemma \ref{lem:KMWss} and Theorem \ref{thm:main}.

If $k$ has cohomological dimension $\leq 1$, it follows immediately from Corollary \ref{cor:main2} that the top Chern class is the only obstruction to splitting a free rank $1$ summand.

If $k$ has cohomological dimension $\leq 2$, Theorem \ref{thm:main2} says in this context that the Euler class of $E$ (take $\L = \mathrm{det}(E)$) is trivial if and only if the top Chern class and the first obstruction class in $E(\mathcal L,\MW)_\infty^{d,d+1}$ vanish.
\end{proof}

\begin{footnotesize}
\bibliographystyle{alpha}
\bibliography{secondarycharacteristicclasses}
\end{footnotesize}
\end{document}